%% file: main.tex
\documentclass[letterpaper, 10 pt, conference]{ieeeconf}
\usepackage{graphicx}
\usepackage{epstopdf}
\usepackage{tikz}
\usetikzlibrary{arrows.meta,calc,positioning}

\input{mymacros}

\usepackage{xurl}
\usepackage{import}

\IEEEoverridecommandlockouts
\usepackage{amsmath, amssymb}
\usepackage{mathrsfs}

\usepackage{amsthm}

\usepackage{tikz-cd}
\tikzcdset{
  arrow style=tikz,
  diagrams={>={Stealth[length=4pt,width=3.8pt]}}
}

\theoremstyle{definition}
\newtheorem{problemEnv}{Problem}
\newenvironment{problem}[1][]
{
    \if\relax\detokenize{#1}\relax
        \begin{problemEnv}
    \else
        \begin{problemEnv}[#1]
    \fi
}
{\hfill$\diamond$\end{problemEnv}}

\theoremstyle{remark}
\newtheorem{remarkEnv}{Remark}
\newenvironment{remark}[1][]
{
    \if\relax\detokenize{#1}\relax
        \begin{remarkEnv}
    \else
        \begin{remarkEnv}[#1]
    \fi
}
{\hfill$\lozenge$\end{remarkEnv}}

\theoremstyle{plain}

\newtheorem{theoremEnv}{Theorem}

\newtheorem{lemmaEnv}[theoremEnv]{Lemma}
\newtheorem{propositionEnv}[theoremEnv]{Proposition}
\newtheorem{corollaryEnv}[theoremEnv]{Corollary}

\newenvironment{theorem}[1][]
{
    \if\relax\detokenize{#1}\relax
        \begin{theoremEnv}
    \else
        \begin{theoremEnv}[#1]
    \fi
}
{\hfill$\ $\end{theoremEnv}}

\newenvironment{lemma}[1][]
{
    \if\relax\detokenize{#1}\relax
        \begin{lemmaEnv}
    \else
        \begin{lemmaEnv}[#1]
    \fi
}
{\hfill$\ $\end{lemmaEnv}}

\newenvironment{proposition}[1][]
{
    \if\relax\detokenize{#1}\relax
        \begin{propositionEnv}
    \else
        \begin{propositionEnv}[#1]
    \fi
}
{\hfill$\ $\end{propositionEnv}}

\newenvironment{corollary}[1][]
{
    \if\relax\detokenize{#1}\relax
        \begin{corollaryEnv}
    \else
        \begin{corollaryEnv}[#1]
    \fi
}
{\hfill\end{corollaryEnv}}

\theoremstyle{definition}
\newtheorem{definitionEnv}{Definition}

\newenvironment{definition}[1][]
{
    \if\relax\detokenize{#1}\relax
        \begin{definitionEnv}
    \else
        \begin{definitionEnv}[#1]
    \fi
}
{\hfill$\bullet$\end{definitionEnv}}

\theoremstyle{myexample}
\newtheorem{exampleEnv}{Example}

\usepackage{url}
\usepackage{subcaption}
\usepackage{xcolor}

\DeclareMathOperator*{\argmin}{argmin}

\DeclareMathOperator{\Log}{Log}
\DeclareMathOperator{\Exp}{Exp}

\DeclareMathOperator{\dist}{dist}

\usepackage{algorithmic,algorithm}

\usepackage{cite}
\makeatletter
\let\NAT@parse\undefined
\makeatother
\usepackage[colorlinks=true,allcolors=blue]{hyperref}

\title{\LARGE \bf
Intrinsic Path Following on $\mathsf{SO}(3)$
}

\author{Adeel Akhtar$^{1}$\thanks{$^{1}$ Department of Mechanical and Industrial Engineering, New Jersey Institute of Technology, NJ, USA. {\tt\small adeel.akhtar@njit.edu}}}

\begin{document}

\maketitle
\thispagestyle{empty}
\pagestyle{empty}

\begin{abstract}
This paper studies intrinsic path following for fully actuated rigid-body
attitude dynamics over a broad class of attitude paths, namely arbitrary
smooth embedded closed curves on $\mathsf{SO}(3)$. We construct a
tangential--transverse state decomposition on a tubular neighborhood using
normal and pullback bundle geometry. The resulting second-order dynamics
admit a smooth feedback transformation that independently assigns
tangential and transverse covariant accelerations. A transverse
proportional--derivative law renders the path-following manifold invariant
and locally uniformly exponentially stable on an explicit forward-invariant
neighborhood, while leaving the tangential dynamics freely assignable. This
allows independent regulation of progression along the path, stabilization
of a selected attitude on the path, or tracking of a time-varying motion.
For a representative fiber path, the tubular coordinates are obtained in
closed form up to the natural cut-locus obstruction, and simulations
illustrate point stabilization and trajectory tracking.
\end{abstract}

\section{Introduction}

Rigid-body attitude control is a fundamental problem in nonlinear control,
with applications ranging from spacecraft and aerial vehicles to robotic
manipulators and autonomous systems; see, e.g.,
\cite{ChatSanMch2011,MahKumCor2012} and the references therein.
Since attitude evolves on a nonlinear configuration manifold, geometric
methods that respect this structure have become well established,
particularly for stabilization and trajectory
tracking~\cite{LeeLeoMcC2010,AkhWas2020}. While stabilization concerns
regulation to a desired attitude, trajectory tracking prescribes the
desired motion through a time-parameterized reference. In many applications,
however, the geometric route itself is important, for example when it is
chosen to satisfy pointing, visibility, or operational constraints, while
the temporal evolution along that route may remain free or be assigned
independently~\cite{AkhLaw2026CDC,LawAkh2026}.

This leads to the path-following problem, where the objective is to
stabilize a prescribed geometric path without fixing its temporal
parameterization; see~\cite{LawAkh2026} and the references therein.
Path following naturally separates two objectives: convergence transverse
to the path and progression tangential to it
\cite{NieConMagTos2008,AkhLaw2026CDC}. 
Path invariance also has a natural safety interpretation: if the prescribed
path represents an admissible motion, then an attitude initialized on the
path with velocity tangent to it remains on the path for all future time.
This viewpoint underlies transverse-feedback-linearization
approaches for nonlinear systems whose desired paths are specified in
Euclidean output spaces~\cite{NieMag04,AkhWasNie2013Journal}.
Compared with the extensive literature on attitude stabilization and
trajectory tracking, considerably less attention has been given to path
following when the path itself lies on the attitude manifold. Related
approaches include virtual holonomic constraints, guiding-vector-field
methods, and other geometric motion-control techniques
\cite{ConCosMag2017,yao2022guiding,BauGar2026,AkhLaw2026CDC}.

Our formulation is particularly close to the coordinate-free theory of
virtual holonomic constraints (VHCs) for mechanical control systems on
Riemannian manifolds~\cite{ConCosMag2017}. In that framework, an embedded
submanifold is treated as a virtual constraint and rendered invariant,
with the resulting constrained dynamics characterized through an induced
affine connection. Here, instead, we consider path following for a fully
actuated rigid body over arbitrary smooth embedded closed curves on
$\mathsf{SO}(3)$, seeking both transverse stabilization and independent
control of the motion along the path. We therefore construct an intrinsic
state decomposition on a neighborhood of the path and derive a feedback
transformation that separates the tangential and transverse accelerations.

The main contributions are as follows:
\begin{enumerate}
    \item Using tubular neighborhoods and pullback bundles, we construct an
    intrinsic tangential--transverse decomposition around a smooth embedded
    closed attitude path; see Proposition~\ref{prop:D_isomorphism}.

    \item We derive the corresponding second-order tubular dynamics and,
    using the induced tangent and normal connections, prove independent
    assignment of the tangential and transverse covariant accelerations;
    see Proposition~\ref{prop:second_order_tubular_dynamics} and
    Theorem~\ref{thm:intrinsic_tfl}.

    \item We prove path invariance and local uniform exponential stability
    of the lifted path while leaving the tangential dynamics independently
    assignable; see Proposition~\ref{prop:transverse_exponential_stability}
    and Corollary~\ref{cor:invariant_energy_sublevel}.

    \item For a fiber path on the attitude manifold, we obtain closed-form
    tubular coordinates up to the natural cut-locus obstruction; see
    Proposition~\ref{prop:fiber_tubular_coordinates}.
\end{enumerate}
\section{Geometric Preliminaries and Dynamics}
\label{sec:preliminaries}

We provide a summary of key notions used in the paper. For details, we refer readers to~\cite{BulLew2004,GalQua2020}. The configuration space of a rigid body is the special orthogonal group
$Q\eqdef\mathsf{SO}(3)=\{R\in\mathbb R^{3\times3}:R^TR=I,\ \det R=1\}$.
Its Lie algebra is
$\mathfrak{so}(3)=\{\Xi\in\mathbb R^{3\times3}:\Xi^T=-\Xi\}
=\mathsf T_I\mathsf{SO}(3)$,
where $\mathsf T_I\mathsf{SO}(3)$ denotes the tangent space of
$\mathsf{SO}(3)$ at the identity matrix $I$.
For $x\in\mathbb R^3$, let $[x]_\times\in\mathfrak{so}(3)$ denote the
skew-symmetric matrix satisfying $[x]_\times y=x\times y$ for every
$y\in\mathbb R^3$.
The tangent space of $Q$ at $R\in Q$ is
$\mathsf T_RQ=\{R[\xi]_\times:\xi\in\mathbb R^3\}$.
For a smooth manifold $M$, its tangent bundle is the vector bundle
$(\mathsf TM,M,\tau_M)$, where
$\mathsf TM\eqdef\bigsqcup_{q\in M}\mathsf T_qM$ is the disjoint union of
the tangent spaces and $\tau_M:\mathsf TM\to M$ is the canonical
projection, $\tau_M(v_q)=q$. More generally, for a smooth vector bundle
$(E,M,\pi_E)$, we denote by $\Gamma(E)$ the set of its smooth sections,
i.e., smooth maps $s:M\to E$ satisfying $\pi_E\circ s=\operatorname{id}_M$.
In particular, $\mathfrak X(M)\eqdef\Gamma(\mathsf TM)$ denotes the set
of smooth vector fields on $M$.
We use $\exp:\mathfrak{so}(3)\to\mathsf{SO}(3)$ for the matrix
exponential restricted to $\mathfrak{so}(3)$. On a Riemannian manifold
$M$, $\Exp_q:\mathcal D_q\subseteq\mathsf T_qM\to M$ denotes the
Riemannian exponential map at $q$. Whenever $\Exp_q$ is restricted to a
neighborhood $\mathcal V_q\subset\mathsf T_qM$ on which it is a
diffeomorphism onto $\mathcal U_q\subset M$, we denote its inverse by
$\Log_q:\mathcal U_q\to\mathcal V_q\subset\mathsf T_qM$.
The Riemannian norm of $v_q\in\mathsf T_qM$ is
$\|v_q\|_q\eqdef\sqrt{\langle v_q,v_q\rangle_q}$, with the base-point
subscript omitted when clear. The associated Riemannian distance is denoted by
$\dist_M:M\times M\to\mathbb R_{\ge 0}$.

The normal displacement introduced below takes values in fibers attached
to different points of the path. We therefore recall the pullback
construction~\cite{BulLew2004}.

\begin{definition}[Pullback bundle]
\label{def:pullback_bundle}
Let $(E,B,\pi_E)$ be a smooth vector bundle and $f:M\to B$ a smooth map.
The \emph{pullback bundle} of $E$ by $f$ is
\[
    f^*E
    \eqdef
    \{(x,e)\in M\times E:f(x)=\pi_E(e)\},
    \quad
    \pi_{f^*E}(x,e)=x.
\]
The canonical map $\varpi_E:f^*E\to E$ is
$\varpi_E(x,e)=e$. If $s:M\to E$ satisfies $\pi_E\circ s=f$, then
$\widetilde s(x)\eqdef(x,s(x))$ is a smooth section of $f^*E$ satisfying
$\varpi_E\circ\widetilde s=s$. These relations are summarized by
\vspace{-2pt}
\begin{equation*}
\begin{tikzcd}[
    column sep=large,
    row sep=large
]
M
    \arrow[r,"\widetilde{s}"]
    \arrow[rr,bend left=35,"s"]
    \arrow[dr,"\operatorname{id}_M"']
&
f^*E
    \arrow[r,"\varpi_E"]
    \arrow[d,"\pi_{f^*E}"']
&
E
    \arrow[d,"\pi_E"]
\\
&
M
    \arrow[r,"f"']
&
B .
\end{tikzcd}
\end{equation*}
\vspace{-2pt}
\end{definition}

We equip $Q$ with the canonical bi-invariant Riemannian metric~\cite{BulLew2004}
\begin{equation}
\label{eq:bi_invariant_metric}
    \left\langle R[\xi]_\times,R[\zeta]_\times\right\rangle_R
    \eqdef \xi^T\zeta .
\end{equation}
Let $\nabla$ denote the Levi--Civita connection associated with the
Riemannian metric~\eqref{eq:bi_invariant_metric}. Thus,
\[
\nabla:\mathfrak X(Q)\times\mathfrak X(Q)\to\mathfrak X(Q),
\qquad
(X,Y)\mapsto \nabla_XY,
\]
is the unique affine connection on $Q$ that is metric compatible and
torsion free. Since $Q$ is compact, the metric is complete and $\Exp:\mathsf TQ\to Q$ is globally defined. In particular,
$\Exp_R(R[\xi]_\times)=R\exp([\xi]_\times)$.

We consider the fully actuated rigid-body dynamics
\begin{equation}
\label{eq:rigid_body_dynamics}
    \dot R=R[\Omega]_\times,\qquad
    J\dot\Omega+\Omega\times J\Omega=\tau,
\end{equation}
where $R\in Q=\mathsf{SO}(3)$ is the attitude,
$\Omega\in\mathbb R^3$ is the body angular velocity,
$J=J^T\succ0$ is the inertia matrix, and
$\tau\in\mathbb R^3$ is the control torque~\cite{AkhWas2020,BulLew2004}.
The pair $(R,\Omega)\in Q\times\mathbb R^3$ provides a left-trivialized
representation of the tangent-bundle state $(R,V)\in\mathsf TQ$ through
$V=R[\Omega]_\times\in\mathsf T_RQ$. With $V\eqdef\dot R$, the bi-invariant metric gives $\nabla_VV=R[\dot\Omega]_\times$, and hence
\begin{equation}
\label{eq:intrinsic_mechanics}
    \nabla_VV=F(R,V)+G_R\tau,
\end{equation}
where
\[
F(R,V)=-R[J^{-1}(\Omega\times J\Omega)]_\times,
\qquad
G_R\tau=R[J^{-1}\tau]_\times.
\]
For every $R\in Q$, $G_R:\mathbb R^3\to\mathsf T_RQ$ is a linear isomorphism. The metric~\eqref{eq:bi_invariant_metric} is used to define the path geometry; it need not coincide with the kinetic-energy metric induced by $J$.

Before developing the geometric construction, we state the control
objective informally.

\noindent\textbf{Informal Problem Statement:}
Given a smooth embedded closed attitude path, we seek a smooth torque
feedback that preserves the path for tangent initial velocities, drives
nearby motions to it exponentially, and allows motion along the path to
be assigned independently of transverse stabilization. A precise
statement is given in Section~\ref{sec:intrinsic_decomposition}.

\section{Attitude Paths and Tubular Coordinates}
\label{sec:tubular_geometry}

We first introduce the geometry used to measure displacement from the desired path. Let $\gamma:\mathbb S^1\to Q$ be a smooth embedding and set
\(
    \mathcal C\eqdef\gamma(\mathbb S^1).
\)
Then $\mathcal C$ is a compact embedded one-dimensional submanifold of
$Q$, diffeomorphic to $\mathbb S^1$. Since $\mathcal C$ is diffeomorphic to $\mathbb S^1$, it is orientable.
Fix an orientation of $\mathcal C$. With respect to the induced
Riemannian metric on $\mathcal C$, let
\[
    T:\mathcal C\to\mathsf T\mathcal C
\]
denote the corresponding smooth unit tangent vector field, so that
$T(q)\in\mathsf T_q\mathcal C$ and $\|T(q)\|_q=1$ for every
$q\in\mathcal C$. For $q\in\mathcal C$, define
\begin{equation}
\label{eq:normal_space}
    N_q\mathcal C\eqdef(\mathsf T_q\mathcal C)^\perp\subset\mathsf T_qQ,
\end{equation}
and let $(N\mathcal C,\mathcal C,\pi_N)$ be the resulting rank-two normal bundle. For $\delta>0$, write
\begin{equation}
\label{eq:normal_disk_bundle}
N^{<\delta}\mathcal C
\eqdef
\bigsqcup_{q\in\mathcal C}
\{\nu_q\in N_q\mathcal C:\|\nu_q\|_q<\delta\},
\end{equation}
for the open radius-$\delta$ normal disk bundle. For each $q\in\mathcal C$, the Riemannian metric induces the orthogonal
decomposition
\[
    \mathsf T_qQ
    =
    \mathsf T_q\mathcal C
    \oplus
    N_q\mathcal C.
\]
We denote the associated orthogonal projections by
$\operatorname{pr}^{\parallel}_q:\mathsf T_qQ\to\mathsf T_q\mathcal C$
and
$\operatorname{pr}^{\perp}_q:\mathsf T_qQ\to N_q\mathcal C$.
When the base point is clear, we omit the subscript $q$ and write
$\operatorname{pr}^{\parallel}$ and $\operatorname{pr}^{\perp}$. The ambient Levi--Civita connection $\nabla$ induces a tangent connection
\[
    \nabla^{\mathcal C}:
    \mathfrak X(\mathcal C)\times\mathfrak X(\mathcal C)
    \to\mathfrak X(\mathcal C)
\]
and a normal connection
\[
    \nabla^\perp:
    \mathfrak X(\mathcal C)\times\Gamma(N\mathcal C)
    \to\Gamma(N\mathcal C).
\]
Specifically, for $X,Y\in\mathfrak X(\mathcal C)$ and
$\eta\in\Gamma(N\mathcal C)$, let $\widetilde X,\widetilde Y$, and
$\widetilde\eta$ be arbitrary smooth local extensions to $Q$. Then, for
$q\in\mathcal C$,
\begin{align}
    (\nabla_X^{\mathcal C}Y)(q)
    &\eqdef
    \operatorname{pr}^{\parallel}_q
    \big((\nabla_{\widetilde X}\widetilde Y)(q)\big),
    \\
    (\nabla_X^\perp\eta)(q)
    &\eqdef
    \operatorname{pr}^{\perp}_q
    \big((\nabla_{\widetilde X}\widetilde\eta)(q)\big).
\end{align}
These definitions are independent of the chosen extensions; moreover,
$\nabla^{\mathcal C}$ is the Levi--Civita connection of the Riemannian
metric induced on $\mathcal C$.

Restricting the Riemannian exponential to the normal bundle gives the
\emph{normal exponential map}
\[
    \Exp^\perp:N\mathcal C\to Q,
    \qquad
    \Exp^\perp(\nu_q)\eqdef\Exp_q(\nu_q).
\]
For $\delta>0$, define the corresponding tubular neighborhood by
\begin{equation}
\label{eq:tube_definition}
    \mathcal U_\delta(\mathcal C)
    \eqdef
    \Exp^\perp\!\left(N^{<\delta}\mathcal C\right).
\end{equation}
Since $\mathcal C$ is compact and embedded, the Riemannian
tubular-neighborhood theorem~\cite[Thm.~5.25, pp.~133--135]{Lee2018Riemannian}
guarantees the existence of a uniform tubular neighborhood of $\mathcal C$.

\begin{lemma}[Metric tubular neighborhood]
\label{lem:tubular_decomposition}
There exists $\delta_{\rm tub}>0$ such that
\begin{equation}
\label{eq:tubular_diffeomorphism}
    \Exp^\perp\big|_{N^{<\delta_{\rm tub}}\mathcal C}:
    N^{<\delta_{\rm tub}}\mathcal C
    \longrightarrow
    \mathcal U_{\delta_{\rm tub}}(\mathcal C)
\end{equation}
is a diffeomorphism onto an open neighborhood of $\mathcal C$. Moreover, if
$R=\Exp_q(\nu_q)$ in this neighborhood, then $q$ is the unique closest point on $\mathcal C$ and
\begin{equation}
\label{eq:metric_projection_property}
    \dist_Q(R,\mathcal C)=\dist_Q(R,q)=\|\nu_q\|_q.
\end{equation}
The same conclusions hold after restriction to $N^{<\delta}\mathcal C$ for any $0<\delta\le\delta_{\rm tub}$.
\end{lemma}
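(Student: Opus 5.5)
We argue along the lines of the standard Riemannian tubular-neighborhood theorem, specialized to the compact embedded curve $\mathcal C\subset Q$, using completeness of $Q$ so that $\Exp^\perp$ is defined on all of $N\mathcal C$.

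\emph{Step 1 (local diffeomorphism along the zero section).} Identify $\mathcal C$ with the zero section of $N\mathcal C$. At $0_q$ we have $\mathsf T_{0_q}N\mathcal C\cong\mathsf T_q\mathcal C\oplus N_q\mathcal C$. The differential of $\Exp^\perp$ at $0_q$ is the identity on the tangent summand, since $\Exp^\perp$ restricted to the zero section is the inclusion. It is also the identity on the normal summand, because $d(\Exp_q)_0=\operatorname{id}$. Hence $d\Exp^\perp_{0_q}$ is the isomorphism $\mathsf T_q\mathcal C\oplus N_q\mathcal C=\mathsf T_qQ$, and the inverse function theorem gives a neighborhood of each $0_q$ on which $\Exp^\perp$ is a local diffeomorphism. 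By compactness of $\mathcal C$ there is a uniform $\delta_1>0$ such that $\Exp^\perp$ is a local diffeomorphism on $N^{<\delta_1}\mathcal C$.

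\emph{Step 2 (injectivity for small radius).} Suppose no $\delta\le\delta_1$ makes $\Exp^\perp|_{N^{<\delta}\mathcal C}$ injective. Then there are sequences $\nu_k\ne\nu'_k$ with norms tending to $0$ and $\Exp^\perp(\nu_k)=\Exp^\perp(\nu'_k)$. By compactness of $\mathcal C$, pass to subsequences with base points $q_k\to q$ and $q'_k\to q'$. Since $\dist_Q(q_k,q'_k)\le\|\nu_k\|+\|\nu'_k\|\to0$, we get $q=q'$, so both sequences converge to $0_q$. This contradicts local injectivity near $0_q$ from Step 1. Thus for some $\delta_2$ the restriction is an injective local diffeomorphism, hence a diffeomorphism onto its open image $\mathcal U_{\delta_2}(\mathcal C)$, which contains $\mathcal C$. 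Restricting to any smaller $\delta$ obviously preserves these properties.

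\emph{Step 3 (metric projection property), the main obstacle.} I would take $\delta_{\rm tub}\le\delta_2/2$, and also smaller than the injectivity radius $\mathrm{inj}(Q)$, which is positive because $Q$ is compact. Let $R=\Exp_q(\nu_q)$ with $\|\nu_q\|<\delta_{\rm tub}$.
\begin{enumerate}
\item Since $\dist_Q(R,q)\le\|\nu_q\|$, we have $\dist_Q(R,\mathcal C)<\delta_{\rm tub}$.
\item By compactness of $\mathcal C$, a closest point $p\in\mathcal C$ exists with $\dist_Q(R,p)<\delta_{\rm tub}<\mathrm{inj}(Q)$, so there is a unique minimizing geodesic from $p$ to $R$, namely $t\mapsto\Exp_p(tw)$ with $\|w\|=\dist_Q(R,p)$.
\item The first variation formula, applied to variations of the initial point along $\mathcal C$, shows that $w\perp\mathsf T_p\mathcal C$. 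Hence $w\in N^{<\delta_{\rm tub}}\mathcal C$ and $\Exp^\perp(w)=R=\Exp^\perp(\nu_q)$.
\item Injectivity from Step 2 gives $w=\nu_q$ and $p=q$. Since any closest point must equal $q$, the closest point is unique.
\item The identity $\dist_Q(R,\mathcal C)=\dist_Q(R,q)=\|\nu_q\|$ then follows, because $\|w\|=\dist_Q(R,p)$.
\end{enumerate}
The delicate parts are justifying the first-variation orthogonality and ensuring the minimizing vector lies in the same radius-$\delta_{\rm tub}$ disk bundle on which injectivity holds. The latter is why we shrink $\delta_{\rm tub}$ below both $\delta_2$ and $\mathrm{inj}(Q)$. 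Alternatively, one can cite \cite[Thm.~5.25]{Lee2018Riemannian} directly for the whole statement.
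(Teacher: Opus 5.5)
Your proposal is correct and follows essentially the same route as the paper. The paper's proof simply invokes the positive normal injectivity radius of the compact embedded curve $\mathcal C$ (via the cited tubular-neighborhood theorem) and takes $\delta_{\rm tub}$ below it; your Steps 1--2 are the standard proof that this radius is positive, and Step 3 spells out the closest-point argument behind the paper's ``unique minimizing normal geodesics,'' with the extra factor $1/2$ unnecessary but harmless and the bound $\delta_{\rm tub}<\mathrm{inj}(Q)$ serving only to make $\dist_Q^2(R,\cdot)$ smooth near $p$ for the first-variation step.
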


\begin{proof}
Compactness and embeddedness of $\mathcal C$ give a positive normal injectivity radius. Choosing $\delta_{\rm tub}$ below it yields the stated diffeomorphism and unique minimizing normal geodesics.
\end{proof}

Accordingly, on $\mathcal U_{\delta_{\rm tub}}(\mathcal C)$ we define
$\varepsilon:\mathcal U_{\delta_{\rm tub}}(\mathcal C)\to N^{<\delta_{\rm tub}}\mathcal C$ and
$P:\mathcal U_{\delta_{\rm tub}}(\mathcal C)\to\mathcal C$ by
\begin{equation}
\label{eq:normal_displacement_map}
\varepsilon
\eqdef
\left(\Exp^\perp\big|_{N^{<\delta_{\rm tub}}\mathcal C}\right)^{-1},
\qquad
P\eqdef\pi_N\circ\varepsilon.
\end{equation}
The map $P$ assigns to each $R$ in the tubular neighborhood its unique
closest point on the path, while $\varepsilon(R)$ is the corresponding
normal displacement from $P(R)$ to $R$.
Thus, for $R\in\mathcal U_{\delta_{\rm tub}}(\mathcal C)$,
\begin{equation}
\label{eq:tubular_reconstruction}
q=P(R),\qquad
\varepsilon(R)\in N_q\mathcal C,\qquad
R=\Exp_q(\varepsilon(R)),
\end{equation}
and $R\in\mathcal C$ if and only if $\varepsilon(R)=0_q$.

Because $\varepsilon(R)$ belongs to the normal fiber over the moving point $P(R)$, it is naturally a normal field along $P$, not a section of $N\mathcal C$ over $\mathcal C$. Applying Definition~\ref{def:pullback_bundle} with $f=P$ gives the pullback normal bundle
\[
    \big(P^*N\mathcal C,\mathcal U,\pi_{P^*N}\big),
    \qquad
    \mathcal U\eqdef\mathcal U_{\delta_{\rm tub}}(\mathcal C),
\]
whose total space is
\begin{equation}
\label{eq:pullback_normal_bundle}
    P^*N\mathcal C
    =
    \left\{
        (R,\eta)\in\mathcal U\times N\mathcal C:
        P(R)=\pi_N(\eta)
    \right\}.
\end{equation}
Its bundle projection and canonical map are
\[
    \pi_{P^*N}(R,\eta)=R,
    \qquad
    \varpi_N(R,\eta)=\eta.
\]
Since $\pi_N\circ\varepsilon=P$, the map
\begin{equation}
\label{eq:epsilon_pullback_section}
    \widetilde\varepsilon:\mathcal U\to P^*N\mathcal C,
    \qquad
    \widetilde\varepsilon(R)\eqdef(R,\varepsilon(R)),
\end{equation}
is a smooth section of the pullback bundle. The maps involved are summarized by the commutative diagram
\vspace{-2pt}
\begin{equation*}
\label{eq:pullback_normal_bundle_diagram}
\begin{tikzcd}[column sep=large,row sep=large]
&
P^*N\mathcal C
    \arrow[r,"\varpi_N"]
    \arrow[d,"\pi_{P^*N}"]
&
N\mathcal C
    \arrow[d,"\pi_N"]
\\
\mathcal U
    \arrow[ur,"\widetilde\varepsilon"]
    \arrow[r,"\operatorname{id}_{\mathcal U}"']
    \arrow[rr,bend right=20,"P"']
&
\mathcal U
    \arrow[r,"P"']
&
\mathcal C .
\end{tikzcd}
\end{equation*}
\vspace{-2pt}
In particular,
\begin{equation}
\label{eq:pullback_epsilon_identities}
    \pi_{P^*N}\circ\widetilde\varepsilon
    =\operatorname{id}_{\mathcal U},
    \qquad
    \varpi_N\circ\widetilde\varepsilon=\varepsilon,
    \qquad
    \pi_N\circ\varpi_N=P\circ\pi_{P^*N}.
\end{equation}
Thus $\widetilde\varepsilon\in\Gamma(P^*N\mathcal C)$, while
$\varepsilon$ is the corresponding normal field along $P$. This
distinction allows us to differentiate the normal displacement
intrinsically using the pullback normal connection.

\section{Intrinsic Tangential--Transverse Decomposition}
\label{sec:intrinsic_decomposition}

As $R$ varies in $\mathcal U$, a normal field along $P$ takes values in
the varying normal fibers $N_{P(R)}\mathcal C$. Thus, its rate of change
cannot be represented as an ordinary derivative in a fixed vector space
without choosing identifications between these fibers. To differentiate
such fields intrinsically, the normal connection $\nabla^\perp$ induces
the pullback connection
\[
    \bar\nabla^\perp\eqdef P^*\nabla^\perp:
    \mathfrak X(\mathcal U)\times\Gamma(P^*N\mathcal C)
    \to\Gamma(P^*N\mathcal C).
\]
Intuitively, $\bar\nabla^\perp$ differentiates sections of
$P^*N\mathcal C$ along directions in $\mathcal U$. The differential (or pushforward) of $P$ at $R$, is denoted by
$dP_R:\mathsf T_R\mathcal U\to\mathsf T_{P(R)}\mathcal C$, which maps such a
direction to the corresponding motion of the base point $P(R)$ along
$\mathcal C$.

\begin{definition}[Covariant derivative of a normal field along $P$]
\label{def:normal_field_derivative}
A \emph{smooth normal field along $P$} is a smooth map
$\eta:\mathcal U\to N\mathcal C$ satisfying
$\pi_N\circ\eta=P$. Its associated section of the pullback normal bundle is
\[
    \widetilde\eta:\mathcal U\to P^*N\mathcal C,
    \qquad
    \widetilde\eta(R)\eqdef(R,\eta(R)).
\]
For $X_R\in\mathsf T_R\mathcal U$, choose any
$X\in\mathfrak X(\mathcal U)$ satisfying $X(R)=X_R$ and define
\begin{equation}
\label{eq:pullback_normal_derivative}
    \nabla^{P,\perp}_{X_R}\eta
    \eqdef
    \varpi_N\!\left(
        (\bar\nabla_X^\perp\widetilde\eta)(R)
    \right)
    \in N_{P(R)}\mathcal C.
\end{equation}
The value of $\nabla^{P,\perp}_{X_R}\eta$ is independent of the chosen
extension $X$ because a connection is $C^\infty(\mathcal U)$-linear in
its first argument.
\end{definition}
In $\nabla^{P,\perp}_{X_R}\eta$, the superscript $P$ indicates that the derivative is taken along the map
$P:\mathcal U\to\mathcal C$, while $\perp$ indicates that the underlying
connection is the normal connection on $N\mathcal C$. 

Since $\pi_N\circ\varepsilon=P$, the normal displacement
$\varepsilon:\mathcal U\to N\mathcal C$ is a smooth normal field along
$P$. Accordingly, for each $R\in\mathcal U$, we define the linear map
\(
    A_R:\mathsf T_R\mathcal U\to N_{P(R)}\mathcal C
\)
by
\begin{equation}
\label{eq:A_definition_precise}
    A_R[X_R]\eqdef\nabla^{P,\perp}_{X_R}\varepsilon.
\end{equation}
To see its geometric meaning, consider a smooth curve $R(t)$ in
$\mathcal U$ and define
$q(t)=P(R(t))$ and $\nu(t)=\varepsilon(R(t))$. Then
\begin{equation}
\label{eq:A_curve_interpretation}
    A_{R(t)}[\dot R(t)]
    =
    \nabla^\perp_{\dot q(t)}\nu(t),
\end{equation}
where the right-hand side is the covariant derivative, with respect to
the normal connection, of the normal vector field $\nu(t)$ along
$q(t)$. Thus, $A_R[V]$ gives the intrinsic rate of change of the normal
displacement and will serve as the transverse velocity.

For a state
$(R,V)\in\mathsf T\mathcal U_{\delta_{\rm tub}}(\mathcal C)$, we define
\begin{equation}
\label{eq:intrinsic_state_variables}
    q=P(R),\quad
    \nu=\varepsilon(R),\quad
    v^\parallel=dP_R[V],\quad
    v^\perp=A_R[V].
\end{equation}
Here $q\in\mathcal C$ specifies the projected position along the path,
$\nu\in N_q\mathcal C$ is the normal displacement, and
$v^\parallel\in\mathsf T_q\mathcal C$ and
$v^\perp\in N_q\mathcal C$ are the corresponding tangential and
transverse velocities.

With these variables in place, we collect the tangential and transverse
velocity maps into the intrinsic decoupling operator
\begin{equation}
\label{eq:decoupling_operator}
\begin{aligned}
    \mathscr D_R:
    \mathsf T_R\mathcal U_{\delta_{\rm tub}}(\mathcal C)
    &\to
    \mathsf T_q\mathcal C\oplus N_q\mathcal C,
    \\
    X_R
    &\mapsto
    \big(dP_R[X_R],A_R[X_R]\big).
    \end{aligned}
\end{equation}

\begin{proposition}[Nonsingularity of the intrinsic decoupling operator]
\label{prop:D_isomorphism}
For every $R\in\mathcal U_{\delta_{\rm tub}}(\mathcal C)$, the map
$\mathscr D_R$ defined in~\eqref{eq:decoupling_operator} is a linear
isomorphism.
\end{proposition}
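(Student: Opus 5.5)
Both spaces are three-dimensional: $\mathsf T_R\mathcal U$ has dimension $\dim Q=3$, while $\mathsf T_q\mathcal C\oplus N_q\mathcal C$ has dimension $1+2=3$. Since $\mathscr D_R$ is linear, it therefore suffices to show that it is injective, or equivalently surjective. We will do this by identifying $\mathscr D_R$ with the inverse differential of the tubular diffeomorphism of Lemma~\ref{lem:tubular_decomposition}, taken in a suitable splitting of $\mathsf T_{\nu_q}(N\mathcal C)$.

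\textbf{Splitting the tangent space of the normal bundle.} The normal connection $\nabla^\perp$ defines a connection map $K:\mathsf T(N\mathcal C)\to N\mathcal C$. For a curve $t\mapsto\nu(t)\in N\mathcal C$ over $q(t)=\pi_N(\nu(t))$, it is given by $K[\dot\nu(t)]=\nabla^\perp_{\dot q(t)}\nu(t)$. A standard fact about linear connections on vector bundles is that, at each $\nu_q\in N\mathcal C$, the map
\[
(d\pi_N,K):\mathsf T_{\nu_q}(N\mathcal C)\to\mathsf T_q\mathcal C\oplus N_q\mathcal C
\]
is a linear isomorphism. Its kernel is trivial because the vertical subspace $\ker d\pi_N$ is the tangent space to the fiber $N_q\mathcal C$, on which $K$ restricts to the canonical identification with $N_q\mathcal C$; a vector killed by both maps is vertical with zero image under $K$, hence zero. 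Dimension counting then gives the isomorphism. I would verify this in a local orthonormal normal frame $\{e_1,e_2\}$ along $\mathcal C$. With $\nu=\sum_i a^ie_i$, one has $K[\dot\nu]=\sum_i\big(\dot a^i e_i+a^i\nabla^\perp_{\dot q}e_i\big)$, so in coordinates the map is block lower-triangular with identity diagonal blocks.

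\textbf{Chain rule.} Let $R(t)$ be a curve in $\mathcal U$ with $\dot R(0)=X_R$, and set $\nu(t)=\varepsilon(R(t))$. Then $\pi_N\circ\varepsilon=P$ gives $dP_R[X_R]=d\pi_N[d\varepsilon_R X_R]$. By~\eqref{eq:A_curve_interpretation}, which unwinds Definition~\ref{def:normal_field_derivative} using the defining property of the pullback connection, $A_R[X_R]=K[d\varepsilon_R X_R]$. Hence
\[
\mathscr D_R=(d\pi_N,K)_{\varepsilon(R)}\circ d\varepsilon_R .
\]
By Lemma~\ref{lem:tubular_decomposition}, $\varepsilon$ is a diffeomorphism from $\mathcal U$ onto $N^{<\delta_{\rm tub}}\mathcal C$, which is an open subset of $N\mathcal C$. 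So $d\varepsilon_R$ is an isomorphism onto $\mathsf T_{\varepsilon(R)}(N\mathcal C)$, and $\mathscr D_R$ is a composition of isomorphisms.

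\textbf{Main obstacle.} The delicate step is justifying rigorously that the pullback-connection derivative in~\eqref{eq:pullback_normal_derivative} equals $K\circ d\varepsilon_R$, that is, establishing~\eqref{eq:A_curve_interpretation}. I would prove it in the local frame. Write $\varepsilon=\sum_i a^i\,(e_i\circ P)$ with $a^i\in C^\infty(\mathcal U)$. The Leibniz rule and the pullback property $\bar\nabla^\perp_X(P^*e_i)=P^*(\nabla^\perp_{dP X}e_i)$ give
\[
A_R[X_R]=\sum_i\big(X_R(a^i)\,e_i+a^i\,\nabla^\perp_{dP_RX_R}e_i\big),
\]
which matches the coordinate formula for $K[d\varepsilon_R X_R]$. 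Equivalently, one can argue injectivity directly: if $dP_R X_R=0$ and $A_R X_R=0$, then the formula above forces $X_R(a^i)=0$ for each $i$. Then $d\varepsilon_R X_R$ is both vertical and has zero fiber component, so it vanishes, and hence $X_R=0$.
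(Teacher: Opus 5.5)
Your proposal is correct and follows essentially the same route as the paper: both factor $\mathscr D_R=(d\pi_N,K^\perp)_{\varepsilon(R)}\circ d\varepsilon_R$, using the connector of the normal connection together with the tubular diffeomorphism $\varepsilon$. You go slightly beyond the paper. You verify the identity $A_R=K^\perp\circ d\varepsilon_R$ in a local normal frame, which the paper simply takes from \eqref{eq:A_curve_interpretation}, and you prove invertibility of $(d\pi_N,K^\perp)$ by a kernel and dimension count, where the paper cites the horizontal--vertical splitting.
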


\begin{proof}
Fix $R\in\mathcal U$ and write
$\eta\eqdef\varepsilon(R)\in N_q\mathcal C$, where $q=P(R)$.
Let
\(
    K^\perp:\mathsf T(N\mathcal C)\to N\mathcal C
\)
denote the connector associated with the normal connection
$\nabla^\perp$. If $\eta(s)$ is a smooth curve in $N\mathcal C$ with
$\eta(0)=\eta$, $q(s)=\pi_N(\eta(s))$, and
$W=\dot\eta(0)\in\mathsf T_\eta(N\mathcal C)$, then
\begin{equation}
\label{eq:connector_defining_property}
    K^\perp(W)
    \eqdef
    \left.
    \nabla^\perp_{\dot q(s)}\eta(s)
    \right|_{s=0}
    \in N_q\mathcal C .
\end{equation}
Here $\eta(s)$ is regarded as a normal vector field along the curve
$q(s)$.
The connection determines the horizontal--vertical decomposition
\[
    \mathsf T_\eta(N\mathcal C)
    =
    H_\eta(N\mathcal C)
    \oplus
    V_\eta(N\mathcal C).
\]
Moreover, $d(\pi_N)_\eta$ restricts to an isomorphism from
$H_\eta(N\mathcal C)$ onto $\mathsf T_q\mathcal C$, while
$K^\perp$ restricts to an isomorphism from
$V_\eta(N\mathcal C)$ onto $N_q\mathcal C$. Consequently,
\begin{equation}
\label{eq:Sigma_definition}
    \Sigma_\eta:
    \mathsf T_\eta(N\mathcal C)
    \to
    \mathsf T_q\mathcal C\oplus N_q\mathcal C,
    \quad
    W\mapsto
    \big(d(\pi_N)_\eta[W],K^\perp(W)\big),
\end{equation}
is a linear isomorphism.
Since
$\varepsilon:\mathcal U\to N^{<\delta_{\rm tub}}\mathcal C$
is a diffeomorphism,
\[
    d\varepsilon_R:
    \mathsf T_R\mathcal U
    \to
    \mathsf T_\eta(N^{<\delta_{\rm tub}}\mathcal C)
    =
    \mathsf T_\eta(N\mathcal C)
\]
is a linear isomorphism. Furthermore, since
$P=\pi_N\circ\varepsilon$, the chain rule gives
\(
    dP_R
    =
    d(\pi_N)_\eta\circ d\varepsilon_R.
\)

Now let $X_R\in\mathsf T_R\mathcal U$ and choose a smooth curve
$R(s)$ in $\mathcal U$ satisfying
$R(0)=R$ and $\dot R(0)=X_R$. Setting
$\eta(s)=\varepsilon(R(s))$ gives
\(
    \dot\eta(0)=d\varepsilon_R[X_R].
\)
Using~\eqref{eq:connector_defining_property} together with
\eqref{eq:A_curve_interpretation}, we obtain
\(
    K^\perp\!\left(d\varepsilon_R[X_R]\right)
    =
    A_R[X_R].
\)
Therefore,
\begin{align}
    \Sigma_{\varepsilon(R)}
    \big(d\varepsilon_R[X_R]\big)
    &=
    \left(
        dP_R[X_R],
        A_R[X_R]
    \right) \notag\\
    &=
    \mathscr D_R[X_R].
\end{align}
Hence
\begin{equation}
\label{eq:D_as_composition}
    \mathscr D_R
    =
    \Sigma_{\varepsilon(R)}
    \circ
    d\varepsilon_R .
\end{equation}
Both factors are linear isomorphisms, and therefore
$\mathscr D_R$ is a linear isomorphism.
\end{proof}

We now lift the preceding fiberwise decomposition to the state space.
For $\eta\in N^{<\delta_{\rm tub}}\mathcal C$, the tangential and
normal velocity components are based at the point
$q=\pi_N(\eta)\in\mathcal C$. To collect these variables in a single
bundle, define
\[
    \rho
    \eqdef
    \pi_N\big|_{N^{<\delta_{\rm tub}}\mathcal C}:
    N^{<\delta_{\rm tub}}\mathcal C\to\mathcal C .
\]
The Whitney sum
$\mathsf T\mathcal C\oplus N\mathcal C$ is a vector bundle over
$\mathcal C$ whose fiber at $q\in\mathcal C$ is
$\mathsf T_q\mathcal C\oplus N_q\mathcal C$.
We then define the pullback bundle
\[
    \big(
        \mathcal Z_{\delta_{\rm tub}},
        N^{<\delta_{\rm tub}}\mathcal C,
        \pi_{\mathcal Z}
    \big)
    \eqdef
    \rho^*
    \big(
        \mathsf T\mathcal C\oplus N\mathcal C
    \big).
\]
Its total space is
\[
\begin{aligned}
    \mathcal Z_{\delta_{\rm tub}}
    =
    \bigl\{
        (\eta,w,\zeta):\;&
        \eta\in N^{<\delta_{\rm tub}}\mathcal C,\;
        w\in\mathsf T_{\pi_N(\eta)}\mathcal C,\\
        &\zeta\in N_{\pi_N(\eta)}\mathcal C
    \bigr\}.
\end{aligned}
\]
with bundle projection
$\pi_{\mathcal Z}(\eta,w,\zeta)=\eta$.

\begin{proposition}[Intrinsic state transformation]
\label{prop:intrinsic_state_diffeomorphism}
The map
\begin{equation}
\label{eq:Psi_precise}
\begin{aligned}
    \Psi:
    \mathsf T\mathcal U_{\delta_{\rm tub}}(\mathcal C)
    &\to
    \mathcal Z_{\delta_{\rm tub}},
    \\
    (R,V)
    &\mapsto
    \big(
        \varepsilon(R),
        dP_R[V],
        A_R[V]
    \big)
\end{aligned}
\end{equation}
is a diffeomorphism. Its inverse is
\begin{equation}
\label{eq:Psi_inverse_precise}
\begin{aligned}
    \Psi^{-1}:
    \mathcal Z_{\delta_{\rm tub}}
    &\to
    \mathsf T\mathcal U_{\delta_{\rm tub}}(\mathcal C),
    \\
    (\eta,w,\zeta)
    &\mapsto
    \left(
        R,\,
        \mathscr D_R^{-1}(w,\zeta)
    \right),
    \qquad
    R=\Exp^\perp(\eta).
\end{aligned}
\end{equation}
\end{proposition}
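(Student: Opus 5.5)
The approach is to factor $\Psi$ through the tangent map of the tubular diffeomorphism, mirroring the factorization $\mathscr D_R=\Sigma_{\varepsilon(R)}\circ d\varepsilon_R$ from the proof of Proposition~\ref{prop:D_isomorphism}. By Lemma~\ref{lem:tubular_decomposition}, $\varepsilon:\mathcal U\to N^{<\delta_{\rm tub}}\mathcal C$ is a diffeomorphism. Hence its tangent map
\[
    \mathsf T\varepsilon:\mathsf T\mathcal U\to\mathsf T\big(N^{<\delta_{\rm tub}}\mathcal C\big),
    \qquad
    (R,V)\mapsto d\varepsilon_R[V],
\]
is a diffeomorphism of total spaces, with inverse $\mathsf T(\Exp^\perp|_{N^{<\delta_{\rm tub}}\mathcal C})$. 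It therefore suffices to show that the fiberwise map
\[
    \Sigma:\mathsf T\big(N^{<\delta_{\rm tub}}\mathcal C\big)\to\mathcal Z_{\delta_{\rm tub}},
    \qquad
    W_\eta\mapsto\big(\eta,\,d(\pi_N)_\eta[W_\eta],\,K^\perp(W_\eta)\big),
\]
is a diffeomorphism. Then $\Psi=\Sigma\circ\mathsf T\varepsilon$, because $dP_R=d(\pi_N)_\eta\circ d\varepsilon_R$ and $K^\perp(d\varepsilon_R[V])=A_R[V]$, as established in that proof.

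For $\Sigma$ the plan is as follows. First, $\Sigma$ is smooth: $\mathsf T\pi_N$ is smooth, and the connector $K^\perp:\mathsf T(N\mathcal C)\to N\mathcal C$ is smooth. The latter is checked in a local trivialization of $N\mathcal C$ by an orthonormal normal frame $\{E_1,E_2\}$ along a chart of $\mathcal C$. In these coordinates $\eta=\sum_i\eta^iE_i(q)$, and the connector reads
\[
    K^\perp(\dot q,\dot\eta)=\sum_i\Big(\dot\eta^i+\sum_j\omega^i_j(q)[\dot q]\,\eta^j\Big)E_i(q),
\]
with smooth connection one-forms $\omega^i_j$. Second, $\Sigma$ covers the identity on the base $N^{<\delta_{\rm tub}}\mathcal C$. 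By Proposition~\ref{prop:D_isomorphism}, or more precisely by the fact that $\Sigma_\eta$ is a linear isomorphism, $\Sigma$ is a smooth vector bundle morphism over the identity that is a linear isomorphism on each fiber. Third, a smooth bundle morphism over the identity that is a fiberwise isomorphism is a bundle isomorphism: in local trivializations its inverse is given by pointwise matrix inversion of a smoothly varying invertible matrix, and is therefore smooth. In the local frame above the inverse can even be written down directly, as $(\eta,w,\zeta)\mapsto(\dot q=w,\ \dot\eta^i=\zeta^i-\sum_j\omega^i_j[w]\eta^j)$.

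Composing the two diffeomorphisms shows that $\Psi$ is a diffeomorphism. For the inverse formula, take $(\eta,w,\zeta)\in\mathcal Z_{\delta_{\rm tub}}$ and set $R=\Exp^\perp(\eta)$, so that $\varepsilon(R)=\eta$. The preimage is then $(R,V)$ with $\mathscr D_R[V]=(w,\zeta)$, that is, $V=\mathscr D_R^{-1}(w,\zeta)$, which is well defined by Proposition~\ref{prop:D_isomorphism}.

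The main obstacle is the smoothness of $\Sigma^{-1}$, equivalently of $(\eta,w,\zeta)\mapsto\mathscr D_{\Exp^\perp(\eta)}^{-1}(w,\zeta)$ jointly in all variables, together with a careful identification of $\mathcal Z_{\delta_{\rm tub}}$ as a smooth manifold (a pullback vector bundle) so that the local-trivialization argument applies. I would handle this with the explicit connection-form formula above, which makes both $\Sigma$ and its inverse visibly smooth.
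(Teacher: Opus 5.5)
Your proposal is correct and is essentially the paper's argument. The paper checks the inverse formula directly and gets smoothness of $\Psi^{-1}$ from pointwise matrix inversion of $\mathscr D_R$ in local trivializations, while you lift the factorization $\mathscr D_R=\Sigma_{\varepsilon(R)}\circ d\varepsilon_R$ to the bundle level as $\Psi=\Sigma\circ\mathsf T\varepsilon$; this uses the same ingredients (the tubular diffeomorphism and the fiberwise invertibility from Proposition~\ref{prop:D_isomorphism}), and your connection-form formula for $\Sigma^{-1}$ makes the smoothness of the inverse explicit.
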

\begin{proof}
Let $(R,V)\in
\mathsf T\mathcal U_{\delta_{\rm tub}}(\mathcal C)$ and set
$\eta=\varepsilon(R)$. Since $P=\pi_N\circ\varepsilon$,
\[
    \pi_N(\eta)=P(R),
\]
and hence
$dP_R[V]\in\mathsf T_{\pi_N(\eta)}\mathcal C$ and
$A_R[V]\in N_{\pi_N(\eta)}\mathcal C$. Thus $\Psi$ is well defined.
Conversely, let
$(\eta,w,\zeta)\in\mathcal Z_{\delta_{\rm tub}}$ and set
$R=\Exp^\perp(\eta)$. Since $\varepsilon$ and
$\Exp^\perp|_{N^{<\delta_{\rm tub}}\mathcal C}$ are mutually inverse,
\[
    \varepsilon(R)=\eta,
    \qquad
    P(R)=\pi_N(\eta).
\]
Therefore
$(w,\zeta)\in
\mathsf T_{P(R)}\mathcal C\oplus N_{P(R)}\mathcal C$, and
Proposition~\ref{prop:D_isomorphism} gives the unique velocity
\(
    V=\mathscr D_R^{-1}(w,\zeta).
\)
This proves that~\eqref{eq:Psi_inverse_precise} is well defined.
Using
$\varepsilon(\Exp^\perp(\eta))=\eta$ and
$\mathscr D_R^{-1}(\mathscr D_R[V])=V$ shows directly that
$\Psi$ and~\eqref{eq:Psi_inverse_precise} are mutual inverses.
Finally, $\Psi$ is smooth by smoothness of $\varepsilon$, $P$, and $A$.
The inverse is smooth because $\Exp^\perp$ is smooth and
$R\mapsto\mathscr D_R^{-1}$ is smooth: in local bundle
trivializations, $\mathscr D_R$ is a smooth
$\mathrm{GL}(3)$-valued map, and matrix inversion is smooth on
$\mathrm{GL}(3)$. Hence $\Psi$ is a diffeomorphism.
\end{proof}

Propositions~\ref{prop:D_isomorphism} and
\ref{prop:intrinsic_state_diffeomorphism} show that the variables introduced
above form a nonsingular tangential--transverse decomposition of the
velocity and, together with the normal displacement, a diffeomorphic state
representation throughout the tubular domain. Thus, no state information
is lost in passing from $(R,V)$ to $(\nu,v^\parallel,v^\perp)$.

The lifted path-following manifold is
$\mathsf T\mathcal C\subset\mathsf TQ$. At the zero section of
$N\mathcal C$, the horizontal space of the normal connection coincides
with the tangent space to the zero section. Using
\eqref{eq:D_as_composition}, we therefore obtain the intrinsic
characterization
\begin{equation}
\label{eq:TC_characterization_precise}
    (R,V)\in\mathsf T\mathcal C
    \quad\Longleftrightarrow\quad
    \nu=0_q,\qquad v^\perp=0_q.
\end{equation}
Hence path following amounts to driving the transverse state
$(\nu,v^\perp)$ to zero while allowing $v^\parallel$ to govern motion
along the path.

\subsection{Problem Statement}
\label{sec:problem_statement}

The characterization in~\eqref{eq:TC_characterization_precise} establishes
that path following amounts to stabilizing the transverse state
$(\nu,v^\perp)$ at zero while allowing $v^\parallel$ to prescribe motion
along the path. We can therefore state the control objective as follows.

\begin{problem}[Intrinsic attitude-path following]
\label{prob:intrinsic_path_following}
Given a smooth embedded closed path
$\mathcal C\subset\mathsf{SO}(3)$ and the fully actuated rigid-body
dynamics~\eqref{eq:rigid_body_dynamics}, design a smooth feedback law
$\tau=\kappa(R,V)$, defined on a neighborhood of
$\mathsf T\mathcal C$, such that, for the resulting closed-loop system,
\begin{enumerate}
    \item[\textnormal{P1.1}]
    $\mathsf T\mathcal C$ is forward invariant;

    \item[\textnormal{P1.2}]
    the transverse zero state $(\nu,v^\perp)=(0,0)$ is locally uniformly
    exponentially stable;

    \item[\textnormal{P1.3}]
    the tangential dynamics $(q,v^\parallel)$ can be assigned independently
    of the transverse stabilizing dynamics.
\end{enumerate}
\end{problem}
\section{Intrinsic Tangential--Transverse Linearization}
\label{sec:intrinsic_tfl}

To expose how the control input acts on the intrinsic tangential and
transverse accelerations, we next differentiate $v^\parallel$ and
$v^\perp$. Since these variables belong to vector spaces attached to the
moving point $q=P(R)$, their derivatives are again defined covariantly.

We first consider the tangential component. Analogously to
Definition~\ref{def:normal_field_derivative}, the connection
$\nabla^{\mathcal C}$ induces the pullback connection
$\bar\nabla^{\mathcal C}\eqdef P^*\nabla^{\mathcal C}$ on
$P^*\mathsf T\mathcal C$, with canonical map
$\varpi_T:P^*\mathsf T\mathcal C\to\mathsf T\mathcal C$.
For a smooth tangent field
$Y:\mathcal U\to\mathsf T\mathcal C$ along $P$, let
$\widetilde Y(R)\eqdef(R,Y(R))$. For
$X_R\in\mathsf T_R\mathcal U$, define
\begin{equation}
\label{eq:pullback_tangent_derivative}
    \nabla^{P,\mathcal C}_{X_R}Y
    \eqdef
    \varpi_T\!\left(
        (\bar\nabla_X^{\mathcal C}\widetilde Y)(R)
    \right)
    \in\mathsf T_{P(R)}\mathcal C,
\end{equation}
where $X$ is any smooth extension of $X_R$. As before, the definition is
independent of the chosen extension.

For $X,Y\in\mathfrak X(\mathcal U)$, define the tangent and normal fields
along $P$ by
\[
    dP(Y)(R)\eqdef dP_R[Y(R)],
    \qquad
    A(Y)(R)\eqdef A_R[Y(R)].
\]
Their covariant derivatives are defined by
\begin{align}
\label{eq:nabla_dP_definition}
    (\nabla dP)(X,Y)
    &\eqdef
    \nabla_X^{P,\mathcal C}\big(dP(Y)\big)
    -dP(\nabla_XY),\\
\label{eq:nabla_A_definition}
    (\nabla A)(X,Y)
    &\eqdef
    \nabla_X^{P,\perp}\big(A(Y)\big)
    -A(\nabla_XY).
\end{align}
Thus, at each $R\in\mathcal U$,
\[
    (\nabla dP)_R(X_R,Y_R)\in\mathsf T_{P(R)}\mathcal C,
    \quad
    (\nabla A)_R(X_R,Y_R)\in N_{P(R)}\mathcal C.
\]
Evaluating these terms twice in the velocity direction, we define the
geometric drift
\begin{equation}
\label{eq:geometric_drift}
    \mathscr B(R,V)
    \eqdef
    \big(
        (\nabla dP)_R(V,V),
        (\nabla A)_R(V,V)
    \big)
    \in
    \mathsf T_q\mathcal C\oplus N_q\mathcal C,
\end{equation}
for $q=P(R)$. The following proposition shows how these terms enter the tangential and
transverse acceleration dynamics.

\begin{proposition}[Intrinsic second-order tubular kinematics]
\label{prop:second_order_tubular_dynamics}
Let $R(t)$ be a $C^2$ curve in $\mathcal U$, let
$V(t)\eqdef\dot R(t)$, and let
$q,\nu,v^\parallel,v^\perp$ be defined by
\eqref{eq:intrinsic_state_variables}. Then
\begin{equation}
\label{eq:first_order_tubular_dynamics}
    \dot q=v^\parallel,
    \qquad
    \nabla^\perp_{\dot q}\nu=v^\perp,
\end{equation}
and
\begin{equation}
\label{eq:second_order_tubular_dynamics}
    \begin{bmatrix}
        \nabla^{\mathcal C}_{\dot q}v^\parallel\\[1mm]
        \nabla^\perp_{\dot q}v^\perp
    \end{bmatrix}
    =
    \mathscr B(R,V)
    +
    \mathscr D_R[\nabla_VV],
\end{equation}
where $\nabla_VV\equiv\nabla_{\dot R}\dot R$ denotes the covariant
acceleration of $R(t)$.
\end{proposition}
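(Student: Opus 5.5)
The plan is to reduce both identities to the chain rule for pullback connections along curves. Everything follows once we identify covariant derivatives along $q(t)=P(R(t))$ with pullback-connection derivatives along $R(t)$, and then apply the definitions \eqref{eq:nabla_dP_definition}--\eqref{eq:nabla_A_definition} with $X=Y=V$.

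\emph{First-order relations.} Differentiating $q(t)=P(R(t))$ gives $\dot q=dP_{R}[\dot R]=dP_R[V]=v^\parallel$. For the normal displacement, \eqref{eq:A_curve_interpretation} applied to the curve $R(t)$ gives $\nabla^\perp_{\dot q}\nu=A_{R}[\dot R]=A_R[V]=v^\perp$. This establishes \eqref{eq:first_order_tubular_dynamics}.

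\emph{Second-order relations.} The key auxiliary fact is a curve version of Definition~\ref{def:normal_field_derivative} and its tangential analogue. Let $Y$ be a smooth field on $\mathcal U$, and let $\xi(t)=A(Y)(R(t))$, which is a normal field along $q(t)$. Then
\[
\nabla^\perp_{\dot q}\xi=\nabla^{P,\perp}_{\dot R}\big(A(Y)\big).
\]
The analogous identity holds for $dP(Y)$ with $\nabla^{\mathcal C}$. This is exactly the defining property of the pullback connection $P^*\nabla^\perp$: its covariant derivative along $R(t)$ of a pulled-back section equals the $\nabla^\perp$-derivative along $P\circ R$. I would verify it in a local frame $\{e_i\}$ of $N\mathcal C$ over a chart of $\mathcal C$. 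Writing $A(Y)=\sum f^i\,(e_i\circ P)$, both sides equal $\sum\big(\tfrac{d}{dt}f^i(R(t))\big)e_i+\sum f^i\,\nabla^\perp_{\dot q}e_i$, using $P^*\nabla^\perp_X(e_i\circ P)=\nabla^\perp_{dP(X)}e_i$. This is the same mechanism already invoked in \eqref{eq:A_curve_interpretation}.

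The remaining difficulty is that $V(t)$ is a vector field along a curve, not a field on $\mathcal U$, so \eqref{eq:nabla_A_definition} does not apply verbatim. I would handle this in one of two ways.
\begin{enumerate}
\item If $\dot R(t_0)\neq0$, extend $V$ locally to a smooth field $\widetilde V$ on $\mathcal U$ with $\widetilde V(R(t))=\dot R(t)$ near $t_0$. Then $(\nabla_{\widetilde V}\widetilde V)(R(t))=\nabla_{\dot R}\dot R$.
\item More robustly, note that $(\nabla A)_R$ is a tensor, i.e.\ $C^\infty$-bilinear in $(X,Y)$. Expanding $V(t)=\sum V^j(t)E_j(R(t))$ in a local frame $\{E_j\}$ of $\mathsf T\mathcal U$ and applying the Leibniz rule on both sides yields
\[
\nabla^{P,\perp}_{\dot R}\big(A_R[V]\big)=(\nabla A)_R(\dot R,V)+A_R[\nabla_{\dot R}V].
\]
\end{enumerate}
The second route also covers instants with $\dot R=0$, where the first extension argument fails. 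The analogous identity holds for $dP$.

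Putting $X=Y=V$ then gives
\[
\nabla^\perp_{\dot q}v^\perp=(\nabla A)_R(V,V)+A_R[\nabla_VV],
\qquad
\nabla^{\mathcal C}_{\dot q}v^\parallel=(\nabla dP)_R(V,V)+dP_R[\nabla_VV].
\]
Stacking these two identities and recalling \eqref{eq:decoupling_operator} and \eqref{eq:geometric_drift} yields \eqref{eq:second_order_tubular_dynamics}.

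\emph{Main obstacle.} I expect the hardest step to be the careful justification of the tensoriality and Leibniz expansion for $\nabla dP$ and $\nabla A$ along curves: showing that \eqref{eq:nabla_dP_definition}--\eqref{eq:nabla_A_definition} are $C^\infty(\mathcal U)$-linear in $Y$. This reduces to checking that the $X(f)$ terms cancel, since $A(fY)=fA(Y)$ and both connections satisfy the Leibniz rule. With this tensoriality in hand, the frame computation is routine.
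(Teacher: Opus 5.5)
Your proposal is correct and follows essentially the same route as the paper: the chain rule and \eqref{eq:A_curve_interpretation} give the first-order relations, and evaluating \eqref{eq:nabla_dP_definition}--\eqref{eq:nabla_A_definition} along $R(t)$ with $X=Y=V$ and stacking gives the second-order ones. The paper simply asserts that this evaluation along the curve is legitimate, whereas you supply the justification it leaves implicit: tensoriality of $\nabla dP$ and $\nabla A$ in $Y$, the frame expansion of $V(t)$, and the curve form of the pullback-connection chain rule; your frame route also covers instants where $\dot R=0$.
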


\begin{proof}
Since $q=P(R)$, the chain rule gives
\(
    \dot q=dP_R[V]=v^\parallel.
\)
Similarly, by~\eqref{eq:A_curve_interpretation},
\(
    \nabla^\perp_{\dot q}\nu
    =
    A_R[V]
    =
    v^\perp.
\)
This proves~\eqref{eq:first_order_tubular_dynamics}.
Next, evaluating the covariant-derivative identities
\eqref{eq:nabla_dP_definition}--\eqref{eq:nabla_A_definition}
along the curve $R(t)$ in the velocity direction gives
\begin{align}
    \nabla^{\mathcal C}_{\dot q}v^\parallel
    &=
    (\nabla dP)_R(V,V)
    +
    dP_R[\nabla_VV],
    \label{eq:tangential_acceleration_identity}\\
    \nabla^\perp_{\dot q}v^\perp
    &=
    (\nabla A)_R(V,V)
    +
    A_R[\nabla_VV].
    \label{eq:normal_acceleration_identity}
\end{align}
Stacking these identities and using
\eqref{eq:geometric_drift} and
\eqref{eq:decoupling_operator} yields
\eqref{eq:second_order_tubular_dynamics}.
\end{proof}

For the rigid-body dynamics~\eqref{eq:intrinsic_mechanics}, substituting
$\nabla_VV=F(R,V)+G_R\tau$ into
Proposition~\ref{prop:second_order_tubular_dynamics} shows that the control
acts on the intrinsic covariant accelerations through the composition
$\mathscr D_R\circ G_R$. Accordingly, define the
\emph{intrinsic control-decoupling operator}
\begin{equation}
\label{eq:control_decoupling_map}
\begin{aligned}
    \mathscr A_R:
    \mathbb R^3
    &\to
    \mathsf T_q\mathcal C\oplus N_q\mathcal C,\\
    \tau
    &\mapsto
    \mathscr D_R[G_R\tau],
\end{aligned}
\qquad
\mathscr A_R
\eqdef
\mathscr D_R\circ G_R .
\end{equation}
Since $G_R:\mathbb R^3\to\mathsf T_RQ$ is a linear isomorphism by full
actuation and $\mathscr D_R$ is a linear isomorphism by
Proposition~\ref{prop:D_isomorphism}, $\mathscr A_R$ is a linear
isomorphism for every $R\in\mathcal U$. The acceleration dynamics can
therefore be written as
\begin{equation}
\label{eq:intrinsic_acceleration_control_system}
    \begin{bmatrix}
        \nabla^{\mathcal C}_{\dot q}v^\parallel\\[1mm]
        \nabla^\perp_{\dot q}v^\perp
    \end{bmatrix}
    =
    \mathscr B(R,V)
    +\mathscr D_R[F(R,V)]
    +\mathscr A_R[\tau].
\end{equation}

The geometric structure of $\mathscr A_R$ is summarized by the
commutative diagram:
\begin{equation*}
\label{eq:intrinsic_decoupling_diagram}
\begin{tikzcd}[column sep=1.7em]
\mathbb R^3
    \arrow[r,"G_R"']
    \arrow[rrr,bend left=28,"\mathscr A_R"]
&
\mathsf T_RQ
    \arrow[r,"d\varepsilon_R"]
    \arrow[rr,bend right=20,"\mathscr D_R"']
&
\mathsf T_{\varepsilon(R)}N\mathcal C
    \arrow[r,"\Sigma_{\varepsilon(R)}"]
&
\mathsf T_q\mathcal C\oplus N_q\mathcal C 
\end{tikzcd}
\end{equation*}
It follows from~\eqref{eq:D_as_composition} and
\eqref{eq:control_decoupling_map} that
\[
    \mathscr A_R
    =
    \mathscr D_R\circ G_R
    =
    \Sigma_{\varepsilon(R)}
    \circ d\varepsilon_R
    \circ G_R .
\]
Thus, the physical control torque is mapped successively to the ambient
acceleration, the variation of the tubular coordinates, and finally its
tangential and normal components.
Since $\mathscr A_R$ is a linear isomorphism throughout the tubular
domain, the tangential and transverse covariant accelerations can be
assigned independently. This gives the main intrinsic linearization
result.

\begin{theorem}[Intrinsic tangential--transverse acceleration linearization]
\label{thm:intrinsic_tfl}
Let
\(
    a^\parallel:
    \mathsf T\mathcal U
    \to
    \mathsf T\mathcal C,
    \) and
    \(
    a^\perp:
    \mathsf T\mathcal U
    \to
    N\mathcal C
\)
be smooth maps satisfying
\[
    a^\parallel(R,V)\in\mathsf T_{P(R)}\mathcal C,
    \qquad
    a^\perp(R,V)\in N_{P(R)}\mathcal C
\]
for every $(R,V)\in\mathsf T\mathcal U$. Then the feedback
\begin{equation}
\label{eq:intrinsic_feedback}
    \tau
    =
    \mathscr A_R^{-1}
    \left[
        -\mathscr B(R,V)
        -\mathscr D_R[F(R,V)]
        +
        \begin{pmatrix}
            a^\parallel(R,V)\\
            a^\perp(R,V)
        \end{pmatrix}
    \right]
\end{equation}
defines a smooth map
$\tau:\mathsf T\mathcal U\to\mathbb R^3$ and yields the intrinsic
closed-loop form
\begin{equation}
\label{eq:intrinsic_normal_form}
\begin{aligned}
    \dot q
    &=v^\parallel,
    &
    \nabla^{\mathcal C}_{\dot q}v^\parallel
    &=a^\parallel,
    \\
    \nabla^\perp_{\dot q}\nu
    &=v^\perp,
    &
    \nabla^\perp_{\dot q}v^\perp
    &=a^\perp.
\end{aligned}
\end{equation}
Hence the tangential and normal covariant accelerations can be assigned
independently throughout the tubular domain.
\end{theorem}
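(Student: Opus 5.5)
The proof splits into two parts: smoothness of the feedback~\eqref{eq:intrinsic_feedback}, and then the closed-loop form~\eqref{eq:intrinsic_normal_form} obtained by substitution into Proposition~\ref{prop:second_order_tubular_dynamics}.

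\emph{Smoothness.} Every term inside the bracket is a smooth map $\mathsf T\mathcal U\to\mathsf T\mathcal C\oplus N\mathcal C$ whose value at $(R,V)$ lies in the fiber over $P(R)$. For the terms $\mathscr B(R,V)$ and $\mathscr D_R[F(R,V)]$, this follows from the smoothness of $P$, $\varepsilon$, $A$, and the connections $\nabla^{\mathcal C},\nabla^\perp$. For $(a^\parallel,a^\perp)$, it is the fiber condition assumed in the statement. The bracket is therefore a smooth section of the pullback of $\mathsf T\mathcal C\oplus N\mathcal C$ by $P\circ\tau_{\mathsf T\mathcal U}$.

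Next I show that $R\mapsto\mathscr A_R^{-1}$ is smooth. I would work locally: choose a local orthonormal frame $\{T,n_1,n_2\}$ of $\mathsf T\mathcal C\oplus N\mathcal C$ near $P(R_0)$, which can be taken global since the normal bundle over $\mathcal C\cong\mathbb S^1$ is orientable; locality suffices in any case. In this frame, $\mathscr A_R=\mathscr D_R\circ G_R$ becomes a smooth $\mathrm{GL}(3)$-valued function of $R$. It is invertible because $\mathscr D_R$ is an isomorphism by Proposition~\ref{prop:D_isomorphism} and $G_R$ is an isomorphism by full actuation. Since matrix inversion is smooth on $\mathrm{GL}(3)$, $\tau$ is smooth on each chart, and these local expressions agree on overlaps because $\mathscr A_R^{-1}$ is frame-independent. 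This is the same argument as in the proof of Proposition~\ref{prop:intrinsic_state_diffeomorphism}.

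\emph{Closed-loop form.} Along any $C^2$ closed-loop solution $R(t)$ in $\mathcal U$, the dynamics~\eqref{eq:intrinsic_mechanics} give $\nabla_VV=F+G_R\tau$. The acceleration dynamics~\eqref{eq:intrinsic_acceleration_control_system} hold by Proposition~\ref{prop:second_order_tubular_dynamics} and linearity of $\mathscr D_R$. Substituting the feedback, $\mathscr A_R[\tau]=-\mathscr B-\mathscr D_R[F]+(a^\parallel,a^\perp)$, and the $\mathscr B$ and $\mathscr D_R[F]$ terms cancel. This leaves $\nabla^{\mathcal C}_{\dot q}v^\parallel=a^\parallel$ and $\nabla^\perp_{\dot q}v^\perp=a^\perp$. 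The first-order equations $\dot q=v^\parallel$ and $\nabla^\perp_{\dot q}\nu=v^\perp$ are exactly~\eqref{eq:first_order_tubular_dynamics}.

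By Proposition~\ref{prop:intrinsic_state_diffeomorphism}, $(\nu,v^\parallel,v^\perp)$ is a faithful state, so $a^\parallel$ and $a^\perp$ can be viewed as functions of these coordinates. This gives the independent-assignment claim.

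\emph{Main obstacle.} The delicate step is the smoothness of $\mathscr B$, specifically of $(\nabla dP)$ and $(\nabla A)$ as functions on $\mathsf T\mathcal U$. I would handle it by noting that both are tensorial: their values at $R$ depend only on $X_R,Y_R$. To check this, I would verify $C^\infty$-linearity in $Y$ using the Leibniz rule for the pullback connections together with the linearity of $dP$ and $A$. Evaluating a smooth tensor at $(V,V)$ is then smooth.

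Once the bracket is known to be a smooth section, the remainder of the proof is direct substitution.
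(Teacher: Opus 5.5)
Your proposal is correct and follows essentially the same route as the paper: you obtain smoothness by writing $\mathscr A_R=\mathscr D_R\circ G_R$ in a local trivialization as a smooth $\mathrm{GL}(3)$-valued map and inverting it, and you obtain the closed-loop form by substituting the feedback into the acceleration dynamics and combining with the first-order equations. Your additional tensoriality check for $(\nabla dP)$ and $(\nabla A)$ supplies a smoothness argument for $\mathscr B$ that the paper does not give explicitly.
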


\begin{proof}
By~\eqref{eq:intrinsic_acceleration_control_system}, the intrinsic
acceleration dynamics satisfy
\[
    \begin{bmatrix}
        \nabla^{\mathcal C}_{\dot q}v^\parallel\\[1mm]
        \nabla^\perp_{\dot q}v^\perp
    \end{bmatrix}
    =
    \mathscr B(R,V)
    +\mathscr D_R[F(R,V)]
    +\mathscr A_R[\tau].
\]
As established above,
$\mathscr A_R:\mathbb R^3\to
\mathsf T_q\mathcal C\oplus N_q\mathcal C$
is a linear isomorphism for every $R\in\mathcal U$.
Moreover, $R\mapsto\mathscr A_R$ is smooth; hence, in local
trivializations, $\mathscr A_R$ is represented by a smooth
$\mathrm{GL}(3)$-valued matrix, and therefore
$R\mapsto\mathscr A_R^{-1}$ is smooth. Consequently,
\eqref{eq:intrinsic_feedback} defines a smooth feedback on
$\mathsf T\mathcal U$.
Substituting~\eqref{eq:intrinsic_feedback} into the acceleration
dynamics gives
\[
    \begin{bmatrix}
        \nabla^{\mathcal C}_{\dot q}v^\parallel\\[1mm]
        \nabla^\perp_{\dot q}v^\perp
    \end{bmatrix}
    =
    \begin{pmatrix}
        a^\parallel(R,V)\\
        a^\perp(R,V)
    \end{pmatrix}.
\]
Combining these identities with
\eqref{eq:first_order_tubular_dynamics} yields
\eqref{eq:intrinsic_normal_form}.
\end{proof}

For a parameterized path $\mathcal C=\gamma(\mathbb S^1)$, the
projection can be computed from
\[
    \theta^\star
    =
    \argmin_{\theta\in\mathbb S^1}
    \frac12\dist_Q^2\!\big(R,\gamma(\theta)\big),
    \qquad
    P(R)=\gamma(\theta^\star).
\]
Within the tubular domain the minimizer is unique, and, whenever
$\Log_{P(R)}$ is defined,
$\varepsilon(R)=\Log_{P(R)}R$.
The derivatives in $\mathscr D_R$ and $\mathscr B$ follow by
differentiating the closest-point optimality condition; a closed-form
special case is given in Section~\ref{sec:fiber_special_case}.
\begin{remark}[Local coordinate representation]
On a simply connected arc of $\mathcal C$, let $s$ be an arc-length
coordinate with unit tangent $T$, and choose an orthonormal normal frame
$E_1,E_2$ satisfying $\nabla_T^\perp E_i=0$. Write
\(v^\parallel=\sigma T,\;
a^\parallel=u^\parallel T,\;
\nu=z_iE_i,\;
v^\perp=\zeta_iE_i,\;
a^\perp=w_iE_i\),
with \(i\in\{1,2\}\).
Since $\nabla_T^{\mathcal C}T=0$ and
$\nabla_T^\perp E_i=0$, the intrinsic normal form
\eqref{eq:intrinsic_normal_form} becomes
\[
    \dot s=\sigma,\qquad
    \dot\sigma=u^\parallel,\qquad
    \dot z=\zeta,\qquad
    \dot\zeta=w.
\]
Thus the familiar Euclidean tangential--transverse double-integrator
form is recovered locally.
\end{remark}
\section{Transverse Stabilization and Tangential Progression}
\label{sec:stability_progression}

We first stabilize the transverse subsystem. Since
$\nu,v^\perp,a^\perp\in N_q\mathcal C$, we can define the intrinsic
PD law
\begin{equation}
\label{eq:intrinsic_PD}
    a^\perp
    =
    -k_p\nu-k_dv^\perp,
    \qquad
    k_p,k_d>0.
\end{equation}
Using
$\nabla^\perp_{\dot q}\nu=v^\perp$ and
$\nabla^\perp_{\dot q}v^\perp=a^\perp$
from~\eqref{eq:intrinsic_normal_form}, the transverse closed-loop
dynamics become
\begin{equation}
\label{eq:intrinsic_transverse_closed_loop}
    \nabla^\perp_{\dot q}
    \nabla^\perp_{\dot q}\nu
    +
    k_d\nabla^\perp_{\dot q}\nu
    +
    k_p\nu
    =
    0.
\end{equation}

\begin{proposition}[Uniform exponential transverse estimate]
\label{prop:transverse_exponential_stability}
Under the transverse feedback~\eqref{eq:intrinsic_PD}, there exist
constants $c\ge1$ and $\lambda>0$, depending only on $k_p$ and $k_d$,
such that every closed-loop solution segment contained in the tubular
domain satisfies
\begin{equation}
\label{eq:transverse_exponential_estimate}
    \|\nu(t)\|^2+\|v^\perp(t)\|^2
    \le
    ce^{-\lambda t}
    \big(
        \|\nu(0)\|^2+\|v^\perp(0)\|^2
    \big).
\end{equation}
Hence the estimate is uniform with respect to the tangential motion
$q(t)$. Moreover, the transverse zero state is invariant for as long
as the closed-loop solution remains defined.
\end{proposition}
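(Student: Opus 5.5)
The plan is a quadratic Lyapunov function built from the normal metric, combined with metric compatibility of $\nabla^\perp$. With that, the time derivative along any tangential motion $q(t)$ reduces to the scalar damped-oscillator computation.

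\textbf{Lyapunov function.} For $\alpha\in(0,\min\{k_d,\,k_p/k_d\})$ I would set
\[
W\eqdef \tfrac12(k_p+\alpha k_d)\|\nu\|^2+\alpha\langle\nu,v^\perp\rangle+\tfrac12\|v^\perp\|^2,
\]
with all inner products taken in $N_{q(t)}\mathcal C$.

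\textbf{Metric compatibility.} Since $\nabla^\perp$ is induced from the Levi--Civita connection by orthogonal projection, it is compatible with the fiber metric on $N\mathcal C$. Concretely, for normal fields $\xi,\eta$ along $q(t)$,
\[
\tfrac{d}{dt}\langle\xi,\eta\rangle=\langle\nabla^\perp_{\dot q}\xi,\eta\rangle+\langle\xi,\nabla^\perp_{\dot q}\eta\rangle .
\]
This follows from compatibility of $\nabla$ together with the fact that $\xi,\eta$ are normal, so the tangential component of the ambient derivative pairs to zero.

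\textbf{Derivative of $W$.} Using \eqref{eq:intrinsic_normal_form} with \eqref{eq:intrinsic_PD}, namely $\nabla^\perp_{\dot q}\nu=v^\perp$ and $\nabla^\perp_{\dot q}v^\perp=-k_p\nu-k_dv^\perp$, I compute
\[
\dot W=-\alpha k_p\|\nu\|^2-(k_d-\alpha)\|v^\perp\|^2 .
\]
The cross terms $(k_p+\alpha k_d)\langle\nu,v^\perp\rangle-\alpha k_d\langle\nu,v^\perp\rangle-k_p\langle\nu,v^\perp\rangle$ cancel exactly.

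\textbf{Comparison and integration.} $W$ is a fixed positive-definite quadratic form in $(\|\nu\|,\|v^\perp\|)$ in the sense that
\[
c_1(\|\nu\|^2+\|v^\perp\|^2)\le W\le c_2(\|\nu\|^2+\|v^\perp\|^2).
\]
By Cauchy--Schwarz, $c_1,c_2$ can be taken as the eigenvalues of the $2\times2$ matrix $\begin{bmatrix}k_p+\alpha k_d&\alpha\\\alpha&1\end{bmatrix}/2$, which is positive definite because $\alpha<k_d$ and $\alpha^2<\alpha k_d<k_p+\alpha k_d$. Hence $\dot W\le-\mu W$ with $\mu=\min\{\alpha k_p,k_d-\alpha\}/c_2$. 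Grönwall's inequality then gives \eqref{eq:transverse_exponential_estimate} with $c=c_2/c_1\ge1$ and $\lambda=\mu$. All of these constants depend only on $k_p,k_d$, through a fixed choice such as $\alpha=\tfrac12\min\{k_d,k_p/k_d\}$. They involve no curvature or torsion data of $\mathcal C$ and no information about $q(t)$, which is the claimed uniformity.

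\textbf{Invariance.} Zero initial data give $W(0)=0$, so $W\equiv0$ and $(\nu,v^\perp)\equiv0$ for as long as the solution exists.

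\textbf{Main obstacle.} The only nonroutine step is metric compatibility along an arbitrary curve $q(t)$. This is where the pullback formalism matters: $\nu$ and $v^\perp$ live in varying fibers, and the time derivative of their inner products must pick up no extra terms from the motion of $q$. I would justify it by extending to ambient fields and projecting, or locally by the parallel normal frame of the Local coordinate representation remark. In that frame the system becomes $\dot z=\zeta$, $\dot\zeta=-k_pz-k_d\zeta$ in $\mathbb R^2$, with norms preserved because the frame is orthonormal, and the estimate is immediate.
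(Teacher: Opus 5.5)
Your proof is correct, but it takes a different route from the paper.

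\textbf{The paper's route.} It parallel-transports an orthonormal basis of $N_{q(0)}\mathcal C$ along $q(t)$. Writing $\nu=\sum_i z_iE_i$ gives $v^\perp=\sum_i\dot z_iE_i$, so the closed loop becomes exactly the constant-coefficient system $\ddot z+k_d\dot z+k_pz=0$ in $\mathbb R^2$. The estimate then follows from the Hurwitz property of $A_\perp$ together with $\|\nu\|=\|z\|$ and $\|v^\perp\|=\|\dot z\|$. This is essentially the fallback you mention at the end.

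\textbf{Your route.} You stay frame-free and build a strict Lyapunov function, applying metric compatibility of $\nabla^\perp$ directly to $\nu$ and $v^\perp$. Your steps check out:
\begin{itemize}
\item The cross terms cancel and $\dot W=-\alpha k_p\|\nu\|^2-(k_d-\alpha)\|v^\perp\|^2$.
\item The Cauchy--Schwarz sandwich is legitimate, since the matrix with $-\alpha$ off the diagonal is similar to yours via $\mathrm{diag}(1,-1)$ and so has the same eigenvalues.
\item Only $0<\alpha<k_d$ is actually needed; your extra condition $\alpha<k_p/k_d$ is harmless.
\end{itemize}

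\textbf{What your approach buys.} You get explicit constants, $c=c_2/c_1$ and $\lambda=\min\{\alpha k_p,k_d-\alpha\}/c_2$, with no frame to construct along $q(t)$. Your $W$ is also a natural strictification of the paper's later energy $\mathcal E_\perp$. That energy is exactly your $W$ at $\alpha=0$, and it only yields $\dot{\mathcal E}_\perp=-k_d\|v^\perp\|^2\le 0$. A Lyapunov function of your type also gives robustness margins if $a^\perp$ is only approximately assigned.

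\textbf{What the paper's approach buys.} It gives an exact reduction rather than just an estimate. In the parallel frame the transverse dynamics is literally the same LTI system for every tangential motion. So the histories of $\|\nu\|$ and $\|v^\perp\|$ are determined by the initial transverse data alone, which is precisely the coincidence reported in the numerical section. In addition, the achievable decay rate is tied directly to the spectrum of $A_\perp$.
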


\begin{proof}
Let $E_1(t),E_2(t)$ be the orthonormal normal frame obtained by parallel
transporting an orthonormal basis of $N_{q(0)}\mathcal C$ along the
projected curve $q(t)$. Thus,
\[
    \nabla^\perp_{\dot q}E_i=0,
    \qquad
    \langle E_i,E_j\rangle=\delta_{ij},
    \qquad i,j\in\{1,2\},
\]
where orthonormality is preserved because the normal connection is
metric compatible.
We can write
\[
    \nu(t)=\sum_{i=1}^2 z_i(t)E_i(t),
    \qquad
    z(t)\eqdef
    \begin{bmatrix}
        z_1(t)\\ z_2(t)
    \end{bmatrix}.
\]
Since the frame is parallel,
\[
    \nabla^\perp_{\dot q}\nu
    =
    \sum_{i=1}^2\dot z_iE_i,
    \qquad
    \nabla^\perp_{\dot q}
    \nabla^\perp_{\dot q}\nu
    =
    \sum_{i=1}^2\ddot z_iE_i.
\]
In particular, since
$v^\perp=\nabla^\perp_{\dot q}\nu$, the coordinates of $v^\perp$ in
this frame are $\dot z$. Therefore,
\eqref{eq:intrinsic_transverse_closed_loop} is equivalent to
\(
    \ddot z+k_d\dot z+k_pz=0.
\)
With
$x_\perp\eqdef(z,\dot z)$, this system has the constant state matrix
\(
    A_\perp
    =
    \begin{bmatrix}
        0&I_2\\
        -k_pI_2&-k_dI_2
    \end{bmatrix}.
\)
For $k_p,k_d>0$, $A_\perp$ is Hurwitz. Hence there exist
$c\ge1$ and $\lambda>0$, depending only on $k_p$ and $k_d$, such that
\[
    \|z(t)\|^2+\|\dot z(t)\|^2
    \le
    ce^{-\lambda t}
    \big(
        \|z(0)\|^2+\|\dot z(0)\|^2
    \big).
\]
Because the transported frame is orthonormal,
$\|\nu\|=\|z\|$ and $\|v^\perp\|=\|\dot z\|$. This proves
\eqref{eq:transverse_exponential_estimate}. Importantly,
$A_\perp$ is independent of the particular curve $q(t)$; hence the
constants $c$ and $\lambda$ are uniform with respect to the tangential
motion. Finally, if $(\nu(0),v^\perp(0))=(0,0)$, then
\eqref{eq:transverse_exponential_estimate} implies
$(\nu(t),v^\perp(t))=(0,0)$ for every time for which the solution is
defined.
\end{proof}

The tangential motion can be assigned independently of the transverse
feedback. Let
\(
    \kappa^\parallel:\mathsf T\mathcal C\to\mathsf T\mathcal C
\)
be a smooth fiber-preserving map, i.e.,
$\tau_{\mathcal C}\circ\kappa^\parallel=\tau_{\mathcal C}$, and choose
\[
    a^\parallel
    =
    \kappa^\parallel(q,v^\parallel)
    \in\mathsf T_q\mathcal C.
\]
For example, writing $v^\parallel=\sigma T(q)$, consider
\[
    a^\parallel
    =
    -k_\parallel(\sigma-\sigma_{\rm ref})T(q),
    \qquad
    k_\parallel>0.
\]
Since $\dot q=\sigma T(q)$ and
$\nabla_T^{\mathcal C}T=0$ on the one-dimensional Riemannian manifold
$\mathcal C$, one has
\(
    \nabla_{\dot q}^{\mathcal C}v^\parallel
    =
    \dot\sigma T(q).
\)
Hence the tangential dynamics reduce to
\(
    \dot\sigma
    =
    -k_\parallel(\sigma-\sigma_{\rm ref}),
\)
while the transverse closed-loop dynamics remain unchanged.

The exponential estimate in
Proposition~\ref{prop:transverse_exponential_stability} holds for every
solution segment that remains in the tubular domain. To obtain a
forward-time guarantee, we next identify a transverse-energy sublevel
that lies strictly inside a prescribed smaller tube. Define
\begin{equation}
\label{eq:quadratic_transverse_energy}
    \mathcal E_\perp
    \eqdef
    \frac12\|v^\perp\|^2
    +
    \frac{k_p}{2}\|\nu\|^2.
\end{equation}
Using metric compatibility of $\nabla^\perp$,
$\nabla^\perp_{\dot q}\nu=v^\perp$, and
$a^\perp=-k_p\nu-k_dv^\perp$, we obtain
\begin{align}
    \dot{\mathcal E}_\perp
    &=
    \left\langle
        v^\perp,
        \nabla^\perp_{\dot q}v^\perp
    \right\rangle
    +
    k_p
    \left\langle
        \nu,
        \nabla^\perp_{\dot q}\nu
    \right\rangle 
    =
    -k_d\|v^\perp\|^2
    \le0.
\label{eq:quadratic_transverse_energy_derivative}
\end{align}
For any $0<\delta<\delta_{\rm tub}$, define
\begin{equation}
\label{eq:invariant_energy_sublevel}
    \mathcal S_\delta
    \eqdef
    \left\{
        (R,V)\in
        \mathsf T\mathcal U_{\delta_{\rm tub}}(\mathcal C):
        \mathcal E_\perp(R,V)
        <
        \frac{k_p}{2}\delta^2
    \right\}.
\end{equation}
Since
$\mathcal E_\perp\ge(k_p/2)\|\nu\|^2$, every
$(R,V)\in\mathcal S_\delta$ satisfies $\|\nu\|<\delta$ and therefore
\(
    \mathcal S_\delta
    \subset
    \mathsf T\mathcal U_\delta(\mathcal C).
\)
Moreover, $\mathcal S_\delta$ is an open neighborhood of
$\mathsf T\mathcal C$ in
$\mathsf T\mathcal U_{\delta_{\rm tub}}(\mathcal C)$.

The decay of $\mathcal E_\perp$ guarantees that trajectories starting in
$\mathcal S_\delta$ cannot reach the boundary of the smaller tubular
neighborhood through the transverse dynamics. It remains to ensure that
the complete closed-loop trajectory exists for all forward time; this is
provided by the following result.

\begin{corollary}[Invariant tubular energy sublevel]
\label{cor:invariant_energy_sublevel}
Suppose
$a^\parallel=\kappa^\parallel(q,v^\parallel)$ and that the resulting
tangential subsystem
\[
    \dot q=v^\parallel,
    \qquad
    \nabla^{\mathcal C}_{\dot q}v^\parallel
    =
    \kappa^\parallel(q,v^\parallel)
\]
is forward complete. Then, for every
$0<\delta<\delta_{\rm tub}$, the set $\mathcal S_\delta$ is forward
invariant. Moreover, every closed-loop solution starting in
$\mathcal S_\delta$ is forward complete, remains in
$\mathsf T\mathcal U_\delta(\mathcal C)$ for all $t\ge0$, and satisfies
\eqref{eq:transverse_exponential_estimate} for all $t\ge0$.
Consequently, the transverse zero state is uniformly exponentially
stable on $\mathcal S_\delta$.
\end{corollary}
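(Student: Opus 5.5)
The proof is a continuation argument built on the energy identity \eqref{eq:quadratic_transverse_energy_derivative} and Proposition~\ref{prop:transverse_exponential_stability}.

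\emph{Local solution and invariance on its interval.} Fix $0<\delta<\delta_{\rm tub}$ and an initial state $(R_0,V_0)\in\mathcal S_\delta$. The feedback \eqref{eq:intrinsic_feedback} with \eqref{eq:intrinsic_PD} and $a^\parallel=\kappa^\parallel$ is smooth on $\mathsf T\mathcal U$ by Theorem~\ref{thm:intrinsic_tfl}. Hence there is a unique maximal solution on $[0,t_{\max})$ in the open set $\mathsf T\mathcal U_{\delta_{\rm tub}}(\mathcal C)$. On this interval the solution satisfies the normal form \eqref{eq:intrinsic_normal_form}, so \eqref{eq:quadratic_transverse_energy_derivative} gives $\mathcal E_\perp(t)\le\mathcal E_\perp(0)<(k_p/2)\delta^2$. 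Therefore the solution stays in $\mathcal S_\delta\subset\mathsf T\mathcal U_\delta(\mathcal C)$ for all $t<t_{\max}$. Since $\mathcal E_\perp\ge \frac12\|v^\perp\|^2$, we also obtain the bounds $\|\nu(t)\|\le\delta_0<\delta$ and $\|v^\perp(t)\|\le\sqrt{2\mathcal E_\perp(0)}$, where $\delta_0\eqdef\sqrt{2\mathcal E_\perp(0)/k_p}$.

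\emph{Forward completeness.} Pass to the coordinates $\Psi(R,V)=(\nu,v^\parallel,v^\perp)$ of Proposition~\ref{prop:intrinsic_state_diffeomorphism}. In these coordinates the closed loop is the decoupled system \eqref{eq:intrinsic_normal_form}. The tangential subsystem $(q,v^\parallel)$ is autonomous and forward complete by hypothesis, so $(q(t),v^\parallel(t))$ is defined on $[0,\infty)$. Given $q(\cdot)$, the transverse subsystem in a parallel normal frame along $q(t)$ is the linear ODE $\ddot z+k_d\dot z+k_pz=0$, whose solutions exist globally. Suppose $t_{\max}<\infty$. Then $(q,v^\parallel)$ extends continuously to $[0,t_{\max}]$, and $v^\parallel$ stays bounded there. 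By the previous step $(\nu,v^\perp)$ lies in the set $\{\|\nu\|\le\delta_0,\ \|v^\perp\|\le\sqrt{2\mathcal E_\perp(0)}\}$. Together these show that $\Psi(R(t),V(t))$ lies in a compact subset of $\mathcal Z_{\delta_{\rm tub}}$, because $\delta_0<\delta_{\rm tub}$ and $\mathcal C$ is compact. Pulling back by the diffeomorphism $\Psi^{-1}$, the state $(R(t),V(t))$ lies in a compact subset of the open domain $\mathsf T\mathcal U_{\delta_{\rm tub}}(\mathcal C)$. This contradicts the standard escape property of maximal solutions. Hence $t_{\max}=\infty$.

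\emph{Conclusion.} The solution is forward complete and remains in $\mathcal S_\delta\subset\mathsf T\mathcal U_\delta(\mathcal C)$ for all $t\ge0$, so $\mathcal S_\delta$ is forward invariant. Since the whole trajectory lies in the tubular domain, Proposition~\ref{prop:transverse_exponential_stability} applies on every $[0,t]$ and yields \eqref{eq:transverse_exponential_estimate} for all $t\ge0$. The constants $c,\lambda$ depend only on $k_p,k_d$, which gives uniform exponential stability on $\mathcal S_\delta$.

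The main obstacle is the completeness step. It requires that boundedness of $(\nu,v^\parallel,v^\perp)$ translate into precompactness of $(R,V)$ in the open domain, rather than only along the path directions. I would handle this with the compact-set argument through $\Psi^{-1}$. The tangential bound comes from the forward-completeness hypothesis, since on a finite interval the continuous extension of $v^\parallel$ is bounded.
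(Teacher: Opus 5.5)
Your proposal is correct and follows essentially the same route as the paper. The energy bound confines $(\nu,v^\perp)$, forward completeness of the tangential subsystem keeps $(q,v^\parallel)$ bounded on any finite interval, and the diffeomorphism $\Psi$ turns these bounds into precompactness, which contradicts a finite escape time; placing the compact set explicitly inside the open domain $\mathsf T\mathcal U_{\delta_{\rm tub}}(\mathcal C)$ of the closed-loop vector field is, if anything, slightly more careful than the paper's wording.
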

\begin{proof}
Let $[0,T_{\max})$ be the maximal forward interval of a closed-loop
solution starting in $\mathcal S_\delta$, and set
$E_0\eqdef\mathcal E_\perp(0)<(k_p/2)\delta^2$.
By~\eqref{eq:quadratic_transverse_energy_derivative},
$\mathcal E_\perp(t)\le E_0$ for all $t<T_{\max}$, and hence
\[
    \|\nu(t)\|
    \le
    \sqrt{\frac{2E_0}{k_p}}
    <\delta,
    \qquad
    \|v^\perp(t)\|
    \le
    \sqrt{2E_0}.
\]
Thus the transverse state remains bounded and strictly inside the
radius-$\delta$ tubular domain.
Suppose, by contradiction, that $T_{\max}<\infty$. Forward completeness
of the tangential subsystem implies that
$(q(t),v^\parallel(t))$ remains in a compact subset of
$\mathsf T\mathcal C$ on $[0,T_{\max}]$. Together with the bounds above
and the state diffeomorphism $\Psi$, the full trajectory therefore
remains in a compact subset of
$\mathsf T\mathcal U_\delta(\mathcal C)$.
The continuation theorem for smooth vector fields then contradicts
$T_{\max}<\infty$, so $T_{\max}=\infty$.
Finally,
$\mathcal E_\perp(t)\le\mathcal E_\perp(0)<(k_p/2)\delta^2$ for all
$t\ge0$, proving forward invariance of $\mathcal S_\delta$.
Proposition~\ref{prop:transverse_exponential_stability} then gives
\eqref{eq:transverse_exponential_estimate} for all $t\ge0$.
\end{proof}

Thus, Theorem~\ref{thm:intrinsic_tfl} and
Corollary~\ref{cor:invariant_energy_sublevel} solve
Problem~\ref{prob:intrinsic_path_following}.

\section{Closed-Form Fiber Example}
\label{sec:fiber_special_case}

We finally specialize the intrinsic construction to a fiber of the
bundle $\mathsf{SO}(3)\to\mathbb S^2$.

Let $b,d\in\mathbb S^2$, where
$\mathbb S^2\eqdef\{x\in\mathbb R^3:\|x\|=1\}$, and define
\[
    p_b:\mathsf{SO}(3)\to\mathbb S^2,
    \qquad
    p_b(R)\eqdef Rb .
\]
Choose $R_0\in\mathsf{SO}(3)$ such that $R_0b=d$, and define the
isotropy subgroup
\[
    \mathsf{SO}(2)_b
    \eqdef
    \{H\in\mathsf{SO}(3):Hb=b\}
    =
    \{\exp(\theta[b]_\times):\theta\in\mathbb R\}.
\]
The corresponding fiber is the embedded closed path
\begin{equation}
\label{eq:fiber_path}
    \mathcal C_b
    \eqdef
    p_b^{-1}(d)
    =
    R_0\mathsf{SO}(2)_b
    =
    \{R_0\exp(\theta[b]_\times):\theta\in\mathbb R\}.
\end{equation}

For $y\in\mathbb S^2\setminus\{-d\}$, let
$\mathcal R_d(y)\in\mathsf{SO}(3)$ denote the unique minimum-angle
rotation that maps $d$ to $y$. It is given by
\begin{equation}
\label{eq:min_rotation_s2}
    \mathcal R_d(y)
    =
    I+[d\times y]_\times
    +
    \frac{[d\times y]_\times^2}{1+d^Ty}.
\end{equation}
We then define
\begin{equation}
\label{eq:explicit_fiber_projection}
    P_b(R)
    \eqdef
    \mathcal R_d(Rb)^TR,
    \qquad
    Rb\neq-d.
\end{equation}
Since $\mathcal R_d(Rb)d=Rb$, one has
$P_b(R)b=d$, and hence $P_b(R)\in\mathcal C_b$.

\begin{proposition}[Explicit tubular coordinates for the fiber]
\label{prop:fiber_tubular_coordinates}
The normal exponential map restricts to a diffeomorphism
\begin{equation}
\label{eq:fiber_global_tube}
    \Exp^\perp:
    N^{<\pi}\mathcal C_b
    \longrightarrow
    \mathsf{SO}(3)\setminus p_b^{-1}(-d).
\end{equation}
For $R$ in this domain, let
$y=Rb$, $z=\Log_d^{\mathbb S^2}(y)$, and
$\xi_b(R)=P_b(R)^T(d\times z)$. Then $P_b(R)$ is the unique metric
projection of $R$ onto $\mathcal C_b$, $\xi_b(R)^Tb=0$, and
\[
    \varepsilon_b(R)
    =
    P_b(R)[\xi_b(R)]_\times
    \in N_{P_b(R)}\mathcal C_b,
    \quad
    \|\varepsilon_b(R)\|=\|z\|<\pi,
\]
with
$R=\Exp_{P_b(R)}(\varepsilon_b(R))$.
\end{proposition}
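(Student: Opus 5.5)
The plan is to reduce everything to the explicit structure of the fiber $\mathcal C_b=R_0\mathsf{SO}(2)_b$, using left-invariance of the bi-invariant metric. First identify the geometry of $\mathcal C_b$. At $q\in\mathcal C_b$ the tangent space is $\mathsf T_q\mathcal C_b=\{q[\theta b]_\times\}$, so the normal space is
\[
N_q\mathcal C_b=\{q[\xi]_\times:\xi^Tb=0\}
\]
by~\eqref{eq:bi_invariant_metric}. Hence any normal vector is $\nu_q=q[\xi]_\times$ with $\xi\perp b$, and $\Exp_q(\nu_q)=q\exp([\xi]_\times)$.

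The key computation is the map $p_b\circ\Exp^\perp$. Since $qb=d$ and $\xi\perp b$, the rotation $\exp([\xi]_\times)$ moves $b$ along a great circle by angle $\|\xi\|$ in the direction $\xi\times b$. Therefore
\[
p_b(q\exp([\xi]_\times))=q\exp([\xi]_\times)b=\Exp^{\mathbb S^2}_d\big(q(\xi\times b)\big),
\]
where $q(\xi\times b)=(q\xi)\times d\perp d$ has norm $\|\xi\|$. The map $\xi\mapsto q(\xi\times b)$ is a linear isometry from $b^\perp$ onto $\mathsf T_d\mathbb S^2$. Since $\Exp^{\mathbb S^2}_d$ is a diffeomorphism from the open $\pi$-ball onto $\mathbb S^2\setminus\{-d\}$, each normal disk $N^{<\pi}_q\mathcal C_b$ maps diffeomorphically onto $\mathbb S^2\setminus\{-d\}$ under $p_b\circ\Exp_q$. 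In particular, the image of $\Exp^\perp$ lies in $\mathsf{SO}(3)\setminus p_b^{-1}(-d)$.

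Next construct the inverse explicitly. For $R$ with $y=Rb\neq-d$, set $z=\Log^{\mathbb S^2}_d(y)\in\mathsf T_d\mathbb S^2$ with $\|z\|<\pi$. One checks that $\mathcal R_d(y)=\exp([d\times z]_\times)$: both rotations have axis $d\times y$ and angle $\|z\|$, and~\eqref{eq:min_rotation_s2} is Rodrigues' formula for this rotation. Then $R=\mathcal R_d(y)P_b(R)=P_b(R)\exp([P_b(R)^T(d\times z)]_\times)$ by the conjugation identity $q\exp([\xi]_\times)q^T=\exp([q\xi]_\times)$. Setting $\xi_b=P_b(R)^T(d\times z)$ gives:
\begin{itemize}
\item $\xi_b^Tb=(d\times z)^TP_b(R)b=(d\times z)^Td=0$, so $\varepsilon_b(R)$ is normal;
\item $\|\xi_b\|=\|z\|<\pi$;
\item $R=\Exp_{P_b(R)}(\varepsilon_b(R))$.
\end{itemize}
Thus $R\mapsto\varepsilon_b(R)$ is a smooth right inverse of $\Exp^\perp$ on $\mathsf{SO}(3)\setminus p_b^{-1}(-d)$. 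Smoothness holds because $\Log^{\mathbb S^2}_d$ and $\mathcal R_d$ are smooth away from $-d$.

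For injectivity, suppose $q\exp([\xi]_\times)=q'\exp([\xi']_\times)$ with both normal and of norm $<\pi$. Applying $p_b$ and the computation above shows that $q(\xi\times b)$ and $q'(\xi'\times b)$ equal the same $z$. Hence $q\exp([\xi]_\times)q^T=\exp([d\times z]_\times)=q'\exp([\xi']_\times)q'^T$, so $q=q'$ and then $\xi=\xi'$. This makes $\Exp^\perp$ a smooth bijection with smooth inverse $\varepsilon_b$, i.e.\ the diffeomorphism~\eqref{eq:fiber_global_tube}.

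The remaining claim, and the step I expect to be the main obstacle, is that $P_b(R)$ is the unique metric projection and $\dist(R,\mathcal C_b)=\|z\|$. The plan is to use that $p_b$ is a Riemannian submersion, up to scaling, from the bi-invariant metric onto $\mathbb S^2$ with horizontal space $\{R[\xi]_\times:\xi\perp b\}$. It is distance-nonincreasing, since $\|R(\xi\times b)\|\le\|\xi\|$. This gives the lower bound $\dist(R,q')\ge\dist_{\mathbb S^2}(y,d)=\|z\|$ for every $q'\in\mathcal C_b$, with equality attained at $q'=P_b(R)$ by the horizontal geodesic above. Uniqueness follows because equality forces a minimizing curve of length $\|z\|$ that is horizontal everywhere and projects to a minimizing geodesic from $d$ to $y$. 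For $y\neq-d$ that spherical geodesic is unique. Its horizontal lift ending at $R$ is unique, which pins down its starting point $q'$ as $P_b(R)$.
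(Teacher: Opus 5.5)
Your proposal is correct and follows essentially the same route as the paper. Both arguments build the explicit preimage from $\mathcal R_d(y)=\exp([d\times z]_\times)$ and conjugation. Both get injectivity from the fact that $q\exp([\xi]_\times)q^T=\exp([q\xi]_\times)$ is pinned down by $Rb$; your $p_b\circ\Exp^\perp$ computation is the paper's ``minimum-angle rotation'' observation in another form. Both use the Riemannian-submersion lower bound $\dist_Q(R,q')\ge\|z\|$.

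The only real difference is in uniqueness of the metric projection. The paper infers it from injectivity of $\Exp^\perp$, tacitly using that a minimizing geodesic from a nearest point leaves $\mathcal C_b$ normally with length $\|z\|<\pi$. You instead use uniqueness of the spherical minimizing geodesic and of its horizontal lift ending at $R$, which is self-contained and does not need that first-variation fact.
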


\begin{proof}
For $P\in\mathcal C_b$, we have
\[
    \mathsf T_P\mathcal C_b
    =
    \{P[\omega b]_\times:\omega\in\mathbb R\},
    \qquad
    N_P\mathcal C_b
    =
    \{P[\xi]_\times:\xi^Tb=0\}.
\]
Fix $R$ with $y=Rb\neq-d$, set $P=P_b(R)$ and
$z=\Log_d^{\mathbb S^2}(y)$, and let
$\xi=P^T(d\times z)$. Since
$\mathcal R_d(y)=\exp([d\times z]_\times)$,
one has $\xi^Tb=0$, $\|\xi\|=\|z\|<\pi$, and
\[
    \Exp_P(P[\xi]_\times)
    =
    P\exp([\xi]_\times)
    =
    \exp([d\times z]_\times)P
    =
    R.
\]
Thus every $R\notin p_b^{-1}(-d)$ has a preimage in
$N^{<\pi}\mathcal C_b$.
Moreover,
$d(p_b)_P(P[\xi]_\times)=P(\xi\times b)$, and hence
$\|d(p_b)_P(P[\xi]_\times)\|=\|\xi\|$ whenever $\xi\perp b$.
Thus $p_b$ is a Riemannian submersion on the normal spaces, giving
\[
    \dist_{\mathbb S^2}(Rb,d)
    \le
    \dist_Q(R,\widetilde P),
    \qquad
    \widetilde P\in\mathcal C_b.
\]
The normal geodesic above has length
$\|\xi\|=\|z\|=\dist_{\mathbb S^2}(Rb,d)$, so $P_b(R)$ attains this
lower bound and is therefore a nearest point.

Conversely, if
$R=P\exp([\xi]_\times)$ with
$P\in\mathcal C_b$, $\xi^Tb=0$, and $\|\xi\|<\pi$, then
$d^TRb=\cos\|\xi\|>-1$. Furthermore,
$P\exp([\xi]_\times)P^T=\exp([P\xi]_\times)$ is the minimum-angle
rotation from $d$ to $Rb$, since it has angle $\|\xi\|<\pi$ and axis
$P\xi\perp d$. Hence $P_b(R)=P$. The two constructions are smooth
mutual inverses, proving~\eqref{eq:fiber_global_tube}; injectivity also
gives uniqueness of the metric projection.
\end{proof}

Finally,
\(H(R)\eqdef R_0^TP_b(R)\in\mathsf{SO}(2)_b\)
describes progression along the fiber, while
\(\Log_d^{\mathbb S^2}(Rb)\) describes transverse displacement.
The excluded fiber \(p_b^{-1}(-d)\) is the natural cut locus, where
\(\Log_d^{\mathbb S^2}(Rb)\), and hence the normal displacement, is nonunique.
\section{Numerical Illustration}
\label{sec:simulations}

We illustrate the intrinsic construction using the fiber path of
Section~\ref{sec:fiber_special_case} with
$b=d=e_3$ and $R_0=I$, so that
\(
    \mathcal C_b
    =
    \{\exp(\theta[e_3]_\times):\theta\in\mathbb R\}.
\)
Figure~\ref{fig:point_stabilization} considers stabilization of the
attitude $R_\star=I\in\mathcal C_b$. The proposed controller first
drives the attitude toward the prescribed path and then progresses along
the path toward $R_\star$. For comparison, the geometric attitude
controller of Lee--Leok--McClamroch~\cite{LeeLeoMcC2010} is initialized from the same physical
state. While both controllers stabilize $R_\star$, the latter directly
regulates the attitude error and does not enforce attraction to the
prescribed path during the transient.
\begin{figure}[t]
    \centering
    \includegraphics[width=0.8\linewidth]{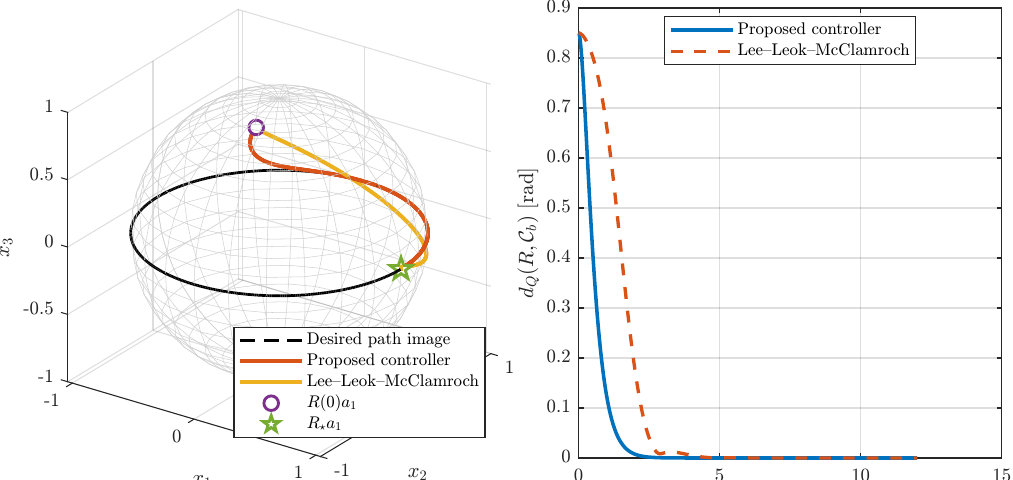}
    \vspace{-5pt}
\caption{Point stabilization at $R_\star=I$ for the proposed and
geometric attitude controllers.}
\vspace{-5pt}
    \label{fig:point_stabilization}
\end{figure}
To illustrate the decoupling established in
Theorem~\ref{thm:intrinsic_tfl}, Fig.~\ref{fig:intrinsic_decoupling}
uses the same initial transverse state and transverse gains under two
different tangential objectives: stabilization of a point on the fiber
and tracking of a moving trajectory along the fiber. Although the
tangential motions are different, the corresponding histories of
$\|\nu\|$ and $\|v^\perp\|$ coincide to numerical precision, illustrating
that the transverse closed-loop dynamics are unaffected by the
tangential assignment.

\begin{figure}[t]
    \centering
    \includegraphics[width=0.8\linewidth]{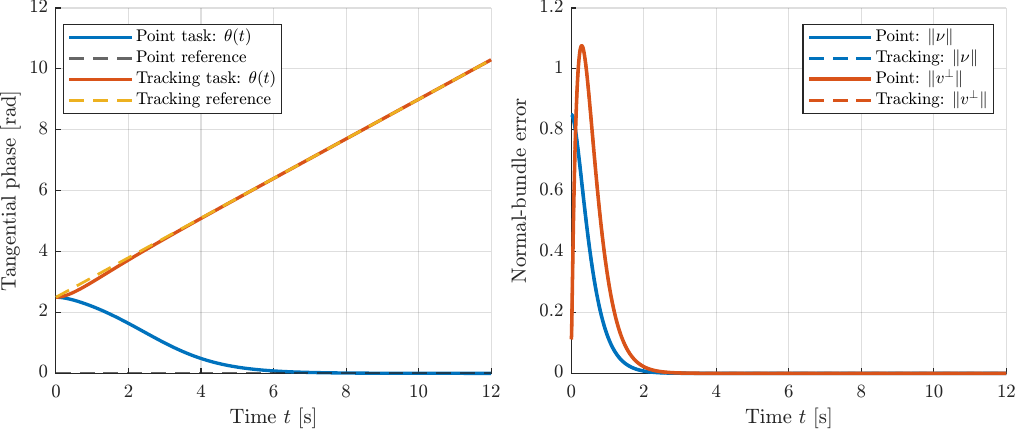}
    \vspace{-5pt}
\caption{Tangential--transverse decoupling under two different
tangential objectives.}
\vspace{-5pt}
    \label{fig:intrinsic_decoupling}
\end{figure}

\section{Conclusions}

We developed an intrinsic path-following framework for fully actuated
rigid-body attitude dynamics on $\mathsf{SO}(3)$ using tubular
neighborhoods, normal bundles, and pullback geometry. The resulting
tangential--transverse decomposition permits independent assignment of
the corresponding covariant accelerations. The proposed transverse
feedback renders the lifted path invariant and locally uniformly
exponentially stable on an explicit forward-invariant neighborhood, while
leaving the tangential dynamics freely assignable. This allows progression
along the path, stabilization of a selected attitude, or tracking of a
time-varying motion without changing the transverse closed-loop dynamics.

\textit{Disclosure of Generative AI Use: } The Large Language Model (ChatGPT) has been used to improve the syntax and grammar of some paragraphs in the manuscript.

\vspace{-2pt}
\bibliographystyle{IEEEtran}
\bibliography{my_bib}
\vspace{-2pt}
\end{document}

%% file: mymacros.tex
\newcommand{\eqdef}{\mathrel{\mathop:}=}

\newcounter{excounter}[section]

\newcounter{rmcounter}

\newcounter{dfcounter}

\newcounter{thmcounter}

\newcounter{corcounter}[thmcounter]

\newcounter{prcounter}

\newcounter{clcounter}